\definecolor{dark-red}{rgb}{0.5,0.15,0.15}
\definecolor{dark-blue}{rgb}{0.15,0.15,0.6}
\definecolor{dark-green}{rgb}{0.15,0.6,0.15}
\newcommand{\Z}{\mathbb{Z}} 
\newcommand{\Q}{\mathbb{Q}} 
\newcommand \rH {{\rm H}}
\newcommand \epz {[\epsilon]_0}
\newcommand \epo {[\epsilon]_1}
\newcommand \epzpi {\epsilon_0}
\newcommand \epopi {\epsilon_1}
\newcommand \ZZ {{\mathbb Z}}
\newcommand \QQ {{\mathbb Q}}
\newtheorem{theorem}{Theorem}[section]
\newtheorem{lemma}[theorem]{Lemma} 
\newtheorem{corollary}[theorem]{Corollary}
\newtheorem{example}[theorem]{Example}
\newtheorem{proposition}[theorem]{Proposition} \theoremstyle{definition}
\newtheorem{remark}[theorem]{Remark} 
\newtheorem{definition}[theorem]{Definition}
\title[Galois action on the lower central series of the fundamental group of the Fermat curve]{The Galois action on the lower central series of the fundamental group of the Fermat curve}
\author{Rachel Davis}
\address{University of Wisconsin-Madison}
\email{rachel.davis@wisc.edu}
\author{Rachel Pries}
\address{Colorado State University}
\email{pries@colostate.edu}
\author{Kirsten Wickelgren}
\address{Duke University}
\email{kirsten.wickelgren@duke.edu}
\thanks{We would like to thank AIM for support for this project through a Square collaboration grant.
We would like to thank Vesna Stojanoska for earlier collaboration and Richard Hain 
for helpful comments.  We would like to thank the anonymous referee for thoughtful comments. 
Pries was supported by NSF grants DMS-15-02227 and DMS-19-01819. Wickelgren was supported by an American Institute of Mathematics five year fellowship and NSF grants DMS-1406380, DMS-1552730, and DMS-2001890. }
\begin{document}

\begin{abstract}
Information about the absolute Galois group $G_K$ of a number field $K$ is encoded in how it acts on the
\'etale fundamental group $\pi$ of a curve $X$ defined over $K$.
In the case that $K=\QQ(\zeta_n)$ is the cyclotomic field and $X$ is the Fermat curve of degree $n \geq 3$, 
Anderson determined the action of $G_K$ on the \'etale homology with coefficients in $\ZZ/n \ZZ$.
The \'etale homology is the first quotient in the lower central series of the \'etale fundamental group.
In this paper, we determine the Galois module structure of the graded Lie algebra for $\pi$.
As a consequence, this determines the action of $G_K$ on all degrees of the associated graded quotient of the lower central series
of the \'etale fundamental group of the Fermat curve of degree $n$, with coefficients in $\ZZ/n \ZZ$.\\
MSC2010: 11D41, 11R18, 13A50, 14F20, 14F35, 14H30\\
Keywords: Fermat curve, cyclotomic field, \'etale fundamental group, homology, lower central series, 2-nilpotent quotient, Galois module.
\end{abstract}


\maketitle

\section{Introduction}

Let $X$ be the Fermat curve of degree $n$, where $n \geq 3$.
Consider the cyclotomic field $K = \QQ(\zeta_n)$; let $\overline{K}$ be its algebraic closure, 
and let $G_K$ be its absolute Galois group.
Anderson described the action of $G_K$ on the \'etale homology $\rH_1(X; \Z/n\Z)$ with coefficients in $\Z/n \Z$ 
of the base change $X_{\overline{K}}$ of $X$ to $\overline{K}$ 
(the base change is suppressed in the notation $\rH_1(X; \Z/n\Z)$); 
more precisely, he analyzed the $G_K$-action on the relative homology $\rH_1(U,Y; \Z/n\Z)$ 
of the open affine Fermat curve $U = \{ (x,y): x^n + y^n = 1 \}$ relative to the set $Y$ of the $2n$ cusps with $xy=0$. 

The main result of \cite[Sections 4 \& 5]{WINF:birs} is that Anderson's description uniquely determines the 
action of $G_K$ on $\rH_1(U,Y; \Z/n\Z)$ when $n$ is prime.
In \cite[Theorem 1.1]{Baction}, the authors find an explicit formula for the action of each $\sigma \in G_K$
on $\rH_1(U,Y; \Z/n\Z)$ when $n$ is a prime satisfying Vandiver's conjecture. 

Let $\pi = [\pi]_1 = \pi_1(X)$ be the \'etale fundamental group of $X_{\overline{K}}$, and for $m \geq 2$, let $[\pi]_m$ be the $m$th subgroup of the lower central series $\pi = [\pi]_1 \supset [\pi]_2 \supset [\pi]_3 \supset \cdots$, defined so that $[\pi]_m=\overline{[\pi, [\pi]_{m-1}]}$ is the closure of the subgroup generated by 
commutators of elements of $\pi$ with elements of $[\pi]_{m-1}$. For example, there is a canonical isomorphism of $G_K$-modules $\rH_1(X; \Z/n\Z) \cong \pi/[\pi]_2 \otimes \Z/n \Z$, and as a group $\pi/[\pi]_2 \otimes \Z/n \Z \cong (\Z/n\Z)^{2g}$, where $g=(n-1)(n-2)/2$ is the genus of the Fermat curve.

In this paper, we describe the action of $G_K$ on each of the higher graded quotients 
$[\pi]_m/[\pi]_{m+1} \otimes \Z/n \Z$ in the lower central series filtration of $\pi$, with coefficients in $\Z/n \Z$; when $n$ is prime, this description determines the action uniquely.
One motivation for this work is that it sheds light on the $2$-nilpotent quotient of the \'etale fundamental group of the 
Fermat curve, because of the exact sequence:
\begin{equation}\label{2nilpiX}
1 \to [\pi]_2/[\pi]_3 \otimes \Z/n \Z\to \pi/[\pi]_3 \otimes \Z/n \Z \to \pi/[\pi]_2 \otimes \Z/n \Z \to 1.
\end{equation}

To state the results more precisely,
consider the graded Lie algebra ${\rm gr}(\pi) = \oplus_{m \geq 1} [\pi]_m/[\pi]_{m+1}$ 
associated with the lower central series for $\pi$, (\cite{lazard, serre65}), which is equipped with its $G_K$-action. The group $\mu_n \times \mu_n$ acts on $X$ by multiplying $x$ and $y$ by $n$th roots of unity, and therefore acts 
$G_K$-equivariantly on $\pi$. 
Let $F$ be the free profinite group on $2g$ generators, and consider its graded Lie algebra 
${\rm gr}(F)= \oplus_{m \geq 1} {\rm gr}_m(F)$. 
It follows from work of Labute in \cite[Theorem, page 17]{labute70} that 
there is an element $\rho$ of weight $2$ such that 
\[{\rm gr}(\pi)  \cong {\rm gr}(F)/\overline{\langle \rho \rangle}  .\] The right hand side may be equipped with a Galois action by identifying ${\rm gr}(F)$ with the \'etale fundamental group of the open complement in $X_{\overline{K}}$ of a $K$-rational point, and the isomorphism may be chosen to be the one induced from the inclusion of the open subscheme. It thus respects the Galois actions on both sides. This Galois action is determined by the action on ${\rm gr}_1(F)\cong {\rm gr}_1(\pi)\cong \rH_1(X; \hat{\Z})$ by \cite[Section 5.7, Corollary~5.12~(v)]{Magnus-Karrass-Solitar}. 

In Theorem~\ref{Tintro}, we determine the isomorphism class of ${\rm gr}(\pi)$ as a graded Lie group with action of 
$\mu_n \times \mu_n$.  Since ${\rm gr}(F)$ is generated in degree $1$, 
it suffices to obtain a complete description of the ideal $\overline{\langle \rho \rangle} \subset {\rm gr}(F)$ and the action 
of $\mu_n \times \mu_n$ on it.

Furthermore, when $n$ is prime, we determine the isomorphism class of
${\rm gr}(\pi) \otimes \Z/n\Z$ as a graded Lie algebra 
with $\mu_n \times \mu_n\times G_K$-action.  This gives the stated application of  
determining the action of $G_K$ on each of the higher graded quotients 
$[\pi]_m/[\pi]_{m+1} \otimes \Z/n \Z$.
For this, it suffices to use the description of the ideal $\overline{\langle \rho \rangle} \subset {\rm gr}(F)$
from Theorem \ref{Tintro} together with the action of $G_K$ on $[\pi]_1/[\pi]_{2}  \otimes \Z/n \Z$
from our earlier result in \cite[Theorem 1.1]{Baction}.

To find the ideal $\overline{\langle \rho \rangle} \subset {\rm gr}(F)$, 
we use the isomorphism of $G_K$-modules \cite[Corollary 8.3]{hain}
\[\displaystyle [\pi]_2/[\pi]_3 \cong \left(\rH_1(X) \wedge \rH_1(X)\right)/{\rm Im}(\mathscr{C}),\]
where 
\begin{equation}\label{defmapC}
{\mathscr{C}}: \rH_2(X) \to \rH_1(X) \wedge \rH_1(X)
\end{equation}
is the dual map to the cup product $\rH^1(X) \wedge \rH^1(X) \to \rH^2(X)$.

The image ${\rm Im}(\mathscr{C})$ is cyclic since $\rH_2(X) \cong \Z(1)$. 
We use the basis of $\rH_1(X)$ as a $\Z$-module from \cite[Theorem 1.2]{E:integral}, see \eqref{Ehomproj}, 
which interacts well with the $\mu_n \times \mu_n$-action.  
This basis gives an isomorphism ${\rm gr}_1(F)  \cong \rH_1(X)$, which in turn induces an isomorphism 
${\rm gr}_2(F) \cong \rH_1(X) \wedge \rH_1(X)$. We may therefore compute $\rho$ as an element of 
$\rH_1(X) \wedge \rH_1(X)$, and any generator of ${\rm Im}(\mathscr{C})$ is a valid choice for $\rho$. 

We note that $\rH_1(X)$ is a quotient of $\rH_1(U)$, which is a subspace 
of the relative homology $\rH_1(U, Y)$.  
For all integers $n \geq 3$, we determine $\rho$ as the image of an element $\Delta$ in $\rH_1(U) \wedge \rH_1(U)$, 
with the result expressed in terms of a basis $\{[E_{i,j}]\}$ for $\rH_1(U)$
defined in Section \ref{subsectionwithEijdef}, Lemma \ref{LfactE}.
This basis is convenient because we know the action of $\mu_n\times \mu_n$ and $G_K$ on its elements.

\begin{theorem} \label{Tintro}
For $n \geq 3$, a generator $\rho$ for ${\rm Im}(\mathscr{C})$
is given by the image in $\rH_1(X) \wedge \rH_1(X)$
of the following element $\Delta$ of $\rH_1(U) \wedge \rH_1(U)$:
\[\displaystyle \Delta = 
\sum_{\substack{1 \leq i_1 \leq i_2 \leq n-1 \\ 1 \leq j_1, j_2 \leq n-1 \\
(i_1, j_1) \not = (i_2, j_2)} } \epsilon(i_1,j_1,i_2,j_2) [E_{i_1,j_1}] \wedge [E_{i_2,j_2}],\]
where
\[\epsilon(i_1,j_1,i_2,j_2) =\begin{cases} 
      1 & \text{if } j_2-j_1 \equiv i_2-i_1 \not \equiv 0 \mod n-1  \\
     -1 & \text{if } j_2-j_1 +1 \equiv i_2-i_1 \not \equiv 0 \mod n-1 \\
      0 & \text{otherwise.}
   \end{cases}
\]
\end{theorem}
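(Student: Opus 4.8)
To identify $\rho$ I would first reduce the problem to the cup product. Since $\rH_2(X)\simeq\Z(1)$ is free of rank one and the cup product $\rH^1(X)\wedge\rH^1(X)\to\rH^2(X)$ is a perfect alternating pairing by Poincar\'e duality, $\mathscr{C}$ is split injective, so $\mathrm{Im}(\mathscr{C})$ is the cyclic module generated by $\rho=\mathscr{C}(\xi)$ for any generator $\xi$ of $\rH_2(X)$. Unwinding the duality, $\rho$ is the alternating form $\rH^1(X)\wedge\rH^1(X)\to\Z$ viewed as an element of $\wedge^2\rH_1(X)$; equivalently $\rho=\sum_{i}a_i\wedge b_i$ for any symplectic basis $\{a_i,b_i\}$ of $\rH_1(X)$ for the intersection pairing, and it is also (after Labute) the image of the defining relator $\prod_i[a_i,b_i]$ of $\pi_1(X)$ in $[\pi]_2/[\pi]_3$. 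Hence it suffices to show the asserted $\Delta\in\rH_1(U)\wedge\rH_1(U)$ maps to this element under $\rH_1(U)\wedge\rH_1(U)\to\rH_1(X)\wedge\rH_1(X)$.

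Next I would pass to relative homology, where the group ring $R=\Z[\mu_n\times\mu_n]$ organizes the combinatorics. Recall that $\rH_1(U,Y)$ is free of rank one over $R$, that $\rH_1(U)\subset\rH_1(U,Y)$ consists of the classes with vanishing boundary in $\tilde\rH_0(Y)$, and that $\rH_1(X)$ is the quotient of $\rH_1(U)$ by the rank $n-1$ submodule spanned by loops around the points at infinity; the basis $\{E_{i,j}\}$ is built from a fixed $R$-generator of $\rH_1(U,Y)$. The cup product on $\rH^1(X)$ is adjoint to an intersection pairing $\rH_1(U)\times\rH_1(U,Y)\to\Z$ coming from Poincar\'e--Lefschetz duality for the surface-with-boundary underlying $U$; this pairing is $R$-sesquilinear, hence determined by its value on a pair of $R$-generators, which is recorded in the earlier sections following Anderson. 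Substituting this data into $\rho=\sum_i a_i\wedge b_i$, after lifting a symplectic basis of $\rH_1(X)$ to $\Z$-combinations of the $E_{i,j}$, produces a well-defined element of $\rH_1(X)\wedge\rH_1(X)$, and the theorem asserts it equals the image of $\Delta$.

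Before carrying out that expansion I would use the $\mu_n\times\mu_n$-action to cut down the bookkeeping. The group $\mu_n\times\mu_n$ of $K$-automorphisms of $X$ acts on the span of the $E_{i,j}$ essentially by shifting the indices, and acts trivially on $\rH_2(X)$, so $\rho$ is fixed by it; consequently the coefficient $\epsilon(i_1,j_1,i_2,j_2)$ can depend only on the differences $(i_2-i_1,j_2-j_1)$, and only on these modulo $n-1$ once the relations cutting $\rH_1(X)$ out of $\rH_1(U)$ are imposed (this is where $n-1$, rather than $n$, enters). Checking the three cases then reduces to a few representative difference pairs: the values $+1$ and $-1$ occur on a diagonal and a nearby shifted diagonal, which matches the shape one expects if, up to the action, $\Delta$ is obtained from a single orbit sum by applying an operator of the form $1-\sigma\tau^{-1}$ with $\sigma,\tau$ generators of the two $\mu_n$-factors, the shift by $1$ in the $i$-index tracking the explicit generating path used to define the $E_{i,j}$.

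The hard part will be precisely this last explicit step: pinning down the signs, and above all the asymmetry between the first case ($j_2-j_1\equiv i_2-i_1$) and the second ($j_2-j_1+1\equiv i_2-i_1$). These shifts are artifacts of the noncanonical choice of generating cycle for the cyclic $R$-module $\rH_1(U,Y)$ and of the fact that $E_{i,j}$ is engineered to be an honest cycle in $U$ rather than a relative cycle, so the verification comes down to a finite but delicate manipulation of the $R$-action on that generator. One further point deserves care: $\Delta$ is a priori only determined modulo the rank $n-1$ kernel of $\rH_1(U)\to\rH_1(X)$, so one must confirm that the stated normalization makes its image a \emph{generator} of $\mathrm{Im}(\mathscr{C})$ and not a proper multiple. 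Because $\rH_2(X)$ is free of rank one and the cup product is unimodular, it suffices to check the image of $\Delta$ is primitive in $\rH_1(X)\wedge\rH_1(X)$ --- for instance by pairing it with a single well-chosen element of $\rH^1(X)\wedge\rH^1(X)$ and obtaining $\pm1$, or by recognizing $\Delta$ as the image of the boundary word $\prod_{i=1}^g[a_i,b_i]$ of the fundamental $2$-cell under $[\pi_1(U)]_2/[\pi_1(U)]_3\cong\rH_1(U)\wedge\rH_1(U)$.
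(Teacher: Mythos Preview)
Your proposal is more of a strategy sketch than a proof, and the strategy has a genuine gap at its core. You correctly identify that $\rho=\sum_i a_i\wedge b_i$ for a symplectic basis and that it is $\mu_n\times\mu_n$-invariant in $\rH_1(X)\wedge\rH_1(X)$. But your central simplification---that the $\mu_n\times\mu_n$-action ``shifts the indices'' of the $E_{i,j}$ and therefore forces $\epsilon(i_1,j_1,i_2,j_2)$ to depend only on the differences $(i_2-i_1,j_2-j_1)$ modulo $n-1$---is incorrect on both counts. A direct check gives $\epz\cdot E_{i,j}=E_{i+1,j}-E_{1,j}$ (with $E_{n,j}=0$), which is not a shift; and even a genuine $\mu_n$-shift would produce periodicity modulo $n$, not $n-1$. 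The modulus $n-1$ in the theorem has nothing to do with the automorphism group: it arises because the nontrivial $E_{i,j}$ are indexed by $\{1,\ldots,n-1\}^2$, and the combinatorics of the relator word wrap around this range. Since your reduction to ``a few representative difference pairs'' rests on this symmetry claim, the argument as written does not go through; you are left with the full explicit computation, which you acknowledge as ``the hard part'' but never perform.

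The paper takes an entirely different route. Rather than computing intersection numbers or invoking duality, it realizes the relator $\prod_{i=1}^g[a_i,b_i]$ concretely as an explicit loop $\tilde{S}$ in $U(\mathbb{C})$. This is done by lifting a star-shaped loop in $\mathbb{A}^1$ through the $\mu_n$-cover $\wp\colon U\to\mathbb{A}^1$, $(x,y)\mapsto x$; the inertia data of $\wp$ dictate exactly which edges $e_{i,j}$ appear and in what order (Lemma~\ref{Lorderloop}, Proposition~\ref{PS=C}). One then replaces each $e_{i,j}$ by the loop $E_{i,j}$ (Lemma~\ref{LetoE}) and applies the commutator identity $\gamma^{-1}\alpha\gamma\beta\equiv\alpha\beta+\gamma\wedge(-\alpha)$ in $[\pi_1(U)]_2/[\pi_1(U)]_3$ (Lemma~\ref{Lsimplify}) to peel off the conjugations layer by layer. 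The mod $n-1$ pattern and the shift by $1$ between the $+1$ and $-1$ cases fall out of the explicit ordering of edges in $\tilde{S}$ (Proposition~\ref{Pwrapin}), and an induction on $i_1$ finishes the computation. This approach sidesteps any need to compute the intersection pairing in the $E_{i,j}$ basis, which is what your method would ultimately require.
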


The action of the absolute Galois group $G_K$ on the homology of the Fermat curve is the subject of 
several foundational papers, including \cite{Ihara}, \cite{Anderson}, \cite{AI-prol}, \cite{Anderson_hyper}, and \cite{Coleman89}.
Let $n=p$ be an odd prime.  Let $L$ be the splitting field of $1-(1-x^p)^p$.
In \cite[Section 10.5]{Anderson}, 
Anderson proved that the action of $G_K$ on the relative homology 
$\rH_1(U,Y; \Z/p\Z)$ factors through the finite Galois extension $L/K$
and gave a theoretical formulation for the action of $q \in Q ={\rm Gal}(L/K)$.
From \cite[Section 10.5]{Anderson}
and the result of Labute quoted above, it follows that
the action of $G_K$ on $[\pi]_m/[\pi]_{m+1} \otimes \Z/p \Z$
factors through $Q ={\rm Gal}(L/K)$, for any $m \geq 2$. 

In \cite[Theorem 1.1]{Baction} and \cite[Theorem 1.1]{WINF:birs}, we made a completely explicit calculation of the $Q$-action on $\rH_1(U, Y; \ZZ/p \ZZ)$
when $p$ is an odd prime satisfying Vandiver's conjecture.\footnote{Vandiver's Conjecture states that $p$ does not divide the order of the class group of $\QQ(\zeta_p+ \zeta_p^{-1})$.  
It is true for all regular primes $p$ and all primes less than $163$ million.}
Our main motivation for Theorem \ref{Tintro} is that the $G_K$-module $[\pi]_2/[\pi]_3$ occurs as the coefficient
group in a map that measures an obstruction for rational points:
$\delta_2: \rH^1(G_K, \rH_1(X)) \to \rH^2(G_K, [\pi]_2/[\pi]_3)$.
For this reason, we highlight the following result.

\begin{corollary} \label{Cintro}
Combining \cite[Theorem 1.1]{Baction} with Theorem \ref{Tintro} yields an explicit computation of $[\pi]_2/[\pi]_3 \otimes \Z/p \Z$ as a $G_K$-module when $p$ is an odd prime satisfying Vandiver's conjecture.
\end{corollary}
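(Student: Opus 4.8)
The plan is to make the isomorphism quoted from \cite[Corollary 8.3]{hain} completely explicit after reducing modulo $p$, and then to feed in the two pieces of explicit data already available: the generator $\rho$ from Theorem \ref{Tintro} and the $Q$-action on relative homology from \cite[Theorem 1.1]{Baction}. Since $\rH_1(X)$ is free over $\Z$, there is a natural identification $\bigl(\rH_1(X)\wedge\rH_1(X)\bigr)\otimes\Z/p\Z \cong \bigl(\rH_1(X)\otimes\Z/p\Z\bigr)\wedge_{\Z/p\Z}\bigl(\rH_1(X)\otimes\Z/p\Z\bigr)$, and tensoring the Hain isomorphism with $\Z/p\Z$ gives
\[ [\pi]_2/[\pi]_3 \otimes \Z/p\Z \;\cong\; \bigl(\rH_1(X)\otimes\Z/p\Z\bigr)^{\wedge 2}\big/\langle \bar\rho\rangle \]
as $G_K$-modules, where $\bar\rho$ is the reduction of $\rho$. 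So it suffices to describe the right-hand side explicitly, and by the discussion following \eqref{defmapC} together with Anderson's result \cite[Section 10.5]{Anderson}, the $G_K$-action here factors through $Q = {\rm Gal}(L/K)$.

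First I would record the $Q$-action on $\rH_1(X)\otimes\Z/p\Z$: \cite[Theorem 1.1]{Baction} gives, for each $q\in Q$, an explicit matrix for the action on $\rH_1(U,Y;\Z/p\Z)$ in a distinguished $\Z[\mu_n\times\mu_n]$-basis; restricting along $\rH_1(U;\Z/p\Z)\hookrightarrow\rH_1(U,Y;\Z/p\Z)$ and then projecting along the surjection $\rH_1(U;\Z/p\Z)\twoheadrightarrow\rH_1(X;\Z/p\Z)$ yields an explicit formula for $q\cdot E_{i,j}$ in the basis $E_{i,j}$ introduced in Section \ref{subsectionwithEijdef}. The induced action on the wedge square is the diagonal one, $q\cdot(a\wedge b) = (q\cdot a)\wedge(q\cdot b)$, so it is determined by the same matrices.

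Next I would identify the submodule $\langle\bar\rho\rangle$. Because $\rH_2(X)\cong\Z(1)$ and $\mathscr{C}$ is $G_K$-equivariant, ${\rm Im}(\mathscr{C})$ is a $G_K$-stable free rank-one submodule on which $G_K$ acts through the cyclotomic character; hence after reducing mod $p$ the line $\langle\bar\rho\rangle$ is a $G_K$-submodule isomorphic to $(\Z/p\Z)(1)$, with $q\cdot\bar\rho = \chi(q)\,\bar\rho$. Theorem \ref{Tintro} writes $\bar\rho$ explicitly in the basis $\{E_{i_1,j_1}\wedge E_{i_2,j_2}\}$ via the coefficients $\epsilon(i_1,j_1,i_2,j_2)$, after tracking the image of $\Delta$ under $\rH_1(U)^{\wedge 2}\to\rH_1(X)^{\wedge 2}$. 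Choosing one index tuple with $\epsilon = \pm 1$, I would use the corresponding basis vector to split off $\langle\bar\rho\rangle$, present the quotient on the remaining basis vectors, and transport the matrices from the previous step through this change of basis; this produces the desired explicit description of $[\pi]_2/[\pi]_3\otimes\Z/p\Z$ as a $Q$- (hence $G_K$-) module.

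The main obstacle is purely bookkeeping: aligning the basis in which \cite[Theorem 1.1]{Baction} is stated with the basis $E_{i,j}$ used in Theorem \ref{Tintro}, and correctly propagating both the surjection $\rH_1(U)\twoheadrightarrow\rH_1(X)$ and the Tate twist on the rank-one submodule through the wedge-square construction; one also needs the routine checks that $-\wedge-$ and the relevant quotient commute with $-\otimes\Z/p\Z$ for these finitely generated free modules. No new conceptual input is needed beyond Theorem \ref{Tintro} and \cite[Theorem 1.1]{Baction}.
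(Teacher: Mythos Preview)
Your proposal is correct and matches the paper's approach: the corollary is not given a separate proof in the paper, but the procedure you outline---pass from $\rH_1(U,Y;\Z/p\Z)$ to $\rH_1(U;\Z/p\Z)$ to $\rH_1(X;\Z/p\Z)$, take the wedge square, and quotient by the image of $\Delta$ from Theorem~\ref{Tintro}---is exactly what is carried out in the paper's Application (Example~\ref{Aqinv}) for $p=5$. One minor simplification: since $K=\Q(\zeta_p)$, the mod~$p$ cyclotomic character is trivial on $G_K$, so $\bar\rho$ is in fact $G_K$-fixed (cf.\ Proposition~\ref{Pinvariant}), and your Tate-twist bookkeeping reduces to the statement that $\langle\bar\rho\rangle$ is a trivial one-dimensional submodule.
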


Section \ref{Sexample} contains several applications.
In Section \ref{Sverify}, we give an independent verification for the formula for $\rho$ if $p=5$, using 
the fact that $\rho$ satisfies certain invariance properties under the action of ${\rm Aut}(X)$ and ${\rm Gal}(L/\QQ)$.
Using these invariance properties, if $p=5$, we also compute that the dimension of the 
$G_\QQ$-invariant subspace of $\rH_1(X; \ZZ/5\ZZ)$ is $2$, see Example \ref{EGQinv}.
This provides a new proof of a result of Tzermias \cite[Corollary 2]{tzermias5}.

In Section \ref{Sapplication}, we consider the short exact sequence
\[0 \to (\ZZ/p\ZZ)\rho \to \rH_1(X; \ZZ/p\ZZ) \wedge \rH_1(X; \ZZ/p\ZZ) \to [\pi]_2/[\pi]_3 \otimes \ZZ/p \ZZ \to 0.\]
Since $Q$ fixes $\rho$, this yields a long exact sequence
\begin{equation} \label{Elongexactintro}
0 \to (\ZZ/p\ZZ) \rho \to \rH^0(Q; \rH_1(X; \ZZ/p\ZZ) \wedge \rH_1(X; \ZZ/p\ZZ)) 
\to \rH^0(Q; [\pi]_2/[\pi]_3 \otimes \ZZ/p\ZZ) \stackrel{\delta}{\to} \rH^1(Q; (\ZZ/p\ZZ) \rho).
\end{equation}
If $p=5$, as an application of Corollary \ref{Cintro}, we compute that the dimension of the
$G_K$-invariant subspace of $\rH_1(X; \ZZ/5\ZZ) \wedge \rH_1(X; \ZZ/5\ZZ)$ 
(resp.\ $[\pi]_2/[\pi]_3 \otimes \Z/5 \Z$) is $35$ (resp.\ 34).
This shows that the coboundary map $\delta$ in \eqref{Elongexactintro} is trivial if $p=5$, 
see Example \ref{Aqinv}; this is a non-trivial fact since $p \mid |Q|$.

\section{The fundamental group of the Fermat curve} \label{S2}

Let $\zeta=\zeta_n = e^{2 \pi i/n}$ (resp.\ $\epsilon=e^{\pi i/n}$) be a primitive $n$th (resp. $2n$th) root of unity.

Consider the Fermat curve $X$ of exponent $n$ with equation $x^n+y^n=z^n$.
Let $Z_0$ be the set of $n$ points where $z=0$.  
Consider the open affine subset $U = X-Z_0$.  
In Sections \ref{S2}-\ref{S4}, the field of definition is the complex numbers; 
let $X:=X({\mathbb C})$ and 
$U:=U({\mathbb C}) = \{(x,y) \in {\mathbb C}^2 \mid x^n + y^n =1\}$.

\subsection{The fundamental group and the definition of $\Delta$} \label{Sfund}

Note that $U$ is a real surface of genus $g={n-1 \choose 2}$ with $n$ punctures. 
We choose the base point $b=(0,1)$.
There exist loops $a_i, z_i$ for $1 \leq i \leq g$ and $c_j$ for $0 \leq j \leq n-1$, with base point $b$, such that
$\pi_1(U)$ has a presentation 
\begin{equation}\label{pi1Upresentation}\pi_1(U) = \langle a_i, z_i, c_j: i=1,\ldots,g,~~j=0,\ldots,n-1\rangle/ \prod_{i=1}^g [a_i,z_i] \prod_{j=0}^{n-1} c_j.\end{equation}
Let $\bar{a}_i, \bar{z}_i, \bar{c}_j$ denote the images of $a_i, z_i, c_j$ in $\rH_1(U)$. $\rH_1(U)$ is equipped with an intersection pairing, which may be defined using Poincar\'e duality $\rH_1(U) \cong \rH^1_c(U)$ and the cup product on compactly supported cohomology.
We can suppose that
\begin{enumerate}
\item \label{en:cj_around_zetaj} the loop $c_j$ circles the puncture $[\zeta^j:\epsilon: 0] \in Z_0$; 
\item \label{en:cj_triv_pairing} each $\bar{c}_j$ pairs trivially with $\bar{a}_i,\bar{z}_i$; each $\bar{c}_j$ pairs trivially with 
$\bar{c}_i$ for $i \not = j$;
\item \label{en:az_symp} and the images of $\bar{a}_i, \bar{z}_i$ in $\rH_1(X)$ form a standard symplectic basis.
\end{enumerate} 

Item \eqref{en:cj_triv_pairing} may be arranged by choosing loops $c_j$ whose images have no set-theoretic intersection with each other or with the loops $a_i, z_i$ for $1 \leq i \leq g$. 
This in turn can be arranged using a standard gluing of an $n$-punctured
polygon with $4g$ sides, with the sides labeled consecutively by 
$a_1, z_1, a_1^{-1}, z_1^{-1}, \ldots, a_g, z_g, a_g^{-1}, z_g^{-1}$.

The set $\{\bar{c}_j\}$ generates the kernel of $\rH_1(U) \to \rH_1(X)$.
Define 
\begin{equation} \label{Edelta}
\Delta = \sum_{i=1}^{g} \bar{a}_i \wedge \bar{z}_i \in \rH_1(U) \wedge \rH_1(U).
\end{equation}

Let $[\pi_1(U)]_2=\overline{[\pi_1(U), \pi_1(U)]}$ and $[\pi_1(U)]_3=\overline{[\pi_1(U), [\pi_1(U)]_2]}$.
Consider the map 
\[C: \rH_1(U) \wedge \rH_1(U) \to [\pi_1(U)]_2/[\pi_1(U)]_3,\] which takes 
the simple wedge $r \wedge s$ to the (equivalance class of the) commutator of a lift of $r$ and a lift 
of $s$ to elements of $\pi_1(U)$.
Since $U$ is not proper, $C$ is an isomorphism.
Recall the definition of ${\mathscr C}$ from \eqref{defmapC}.

\begin{lemma} \label{Ltworho}
The image of $\Delta$ under the map $\wedge^2(\rH_1(U) \to \rH_1(X))$ is a generator 
of ${\rm Im}({\mathscr C})$.
\end{lemma}

\begin{proof}
By definition, $\mathscr{C}$ is the dual of the cup product pairing.  
Since the images of $\bar{a}_i, \bar{z}_i$ form a standard symplectic basis,
the image of $\mathscr{C}$ is generated by the image of $\sum_{i=1}^{g} \bar{a}_i \wedge \bar{z}_i$
under the map $\wedge^2(\rH_1(U) \to \rH_1(X))$, which is 
the image of $\Delta$ by definition.
\end{proof}

Lemma \ref{Ltworho} shows that the image of $\Delta$ under the map $\wedge^2(\rH_1(U) \to \rH_1(X))$ is a valid choice for $\rho$, and we henceforth
let $\rho$ denote this image.

\subsection{The second graded quotient in the lower central series} \label{Ssecondquot}

By \eqref{Edelta}, $\Delta= \sum_{i=1}^g \bar{a}_i \wedge \bar{z}_i$.
Our goal is to determine $\Delta$ in terms of a basis of $\rH_1(U) \wedge \rH_1(U)$ for which 
we know the action of the absolute Galois group.
In order to do this, we investigate the element $T :=  \prod_{i=1}^g [a_i,z_i]$ in $\pi_1(U)$.
Note that $T = (c_0 \circ c_1 \circ \cdots c_{n-1})^{-1}$.

The next lemma shows that $\Delta$ does not depend on the representation
as a product of commutators.

\begin{lemma} \label{Lleeway}
Suppose $r_1, \ldots, r_N, s_1 \ldots, s_N$ are loops in $U$, 
with images $\bar{r}_i, \bar{s}_i$ in $\rH_1(U)$.
If
\[T \ {\rm is \ homotopic \ to \ } [r_1, s_1] \circ \cdots \circ [r_N, s_N],\] 
then $\sum_{i=1}^g \bar{a}_i \wedge \bar{z}_i = \sum_{i=1}^N \bar{r}_i \wedge \bar{s}_i$ in 
$\rH_1(U) \wedge \rH_1(U)$.
\end{lemma}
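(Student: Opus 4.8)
The plan is to compute the class of $T$ in the abelian group $[\pi_1(U)]_2/[\pi_1(U)]_3$ in two different ways and then invoke the injectivity of $C$. Recall from just above that $C \colon \rH_1(U) \wedge \rH_1(U) \to [\pi_1(U)]_2/[\pi_1(U)]_3$ is an isomorphism (since $U$ is not proper), and that by construction $C$ sends a simple wedge $\bar{u} \wedge \bar{v}$ to the class of the commutator $[u,v]$, for any loops $u,v$ in $U(\mathbb{C})$ with images $\bar{u}, \bar{v}$ in $\rH_1(U)$.

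First I would note that $T = \prod_{i=1}^g [a_i,b_i]$ is by definition a product of commutators, so it lies in $[\pi_1(U)]_2$ and has a well-defined image $[T] \in [\pi_1(U)]_2/[\pi_1(U)]_3$. By hypothesis the loop $T$ is homotopic to $[r_1,s_1] \circ \cdots \circ [r_N,s_N]$, hence equal to it in $\pi_1(U)$, so this second product of commutators represents the very same element $[T]$. The key step is then that, because $[\pi_1(U)]_2/[\pi_1(U)]_3$ is abelian, the image of a product of commutators is the sum of the images of the individual commutator factors; combined with the formula for $C$ on simple wedges this gives
\[ C\!\left( \textstyle\sum_{i=1}^g \bar{a}_i \wedge \bar{b}_i \right) = [T] = C\!\left( \textstyle\sum_{i=1}^N \bar{r}_i \wedge \bar{s}_i \right). \]
Applying $C^{-1}$ yields $\sum_{i=1}^g \bar{a}_i \wedge \bar{b}_i = \sum_{i=1}^N \bar{r}_i \wedge \bar{s}_i$ in $\rH_1(U) \wedge \rH_1(U)$, as claimed.

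The only delicate point — really a thing to check rather than a genuine obstacle — is the compatibility used in the key step: that passing to the abelian quotient $[\pi_1(U)]_2/[\pi_1(U)]_3$ converts the a priori noncommutative product of commutators into the bilinear, alternating sum $\sum \bar{u}_i \wedge \bar{v}_i$, independently of all choices of lifts. This is precisely the standard identification of the degree-two piece of the graded Lie algebra of a free group with $\wedge^2$ of its abelianization, which is exactly what underlies both the definition of $C$ and the assertion that $C$ is an isomorphism; once that is granted, the argument above is immediate and requires no further computation.
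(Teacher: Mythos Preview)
Your proof is correct and follows essentially the same route as the paper: both arguments use that the hypothesis gives an equality in $\pi_1(U)$, pass to $[\pi_1(U)]_2/[\pi_1(U)]_3$, and then apply $C^{-1}$ to obtain the desired identity in $\rH_1(U)\wedge\rH_1(U)$. Your write-up is slightly more explicit about why a product of commutators maps to the corresponding sum of simple wedges, but this is exactly what the paper is invoking when it applies the inverse of $C$.
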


\begin{proof}
By hypothesis, in $\pi_1(U)$, 
\begin{equation}\label{prodrs=prodab} 
[a_1, z_1] \circ \cdots \circ [a_g, z_g] = [r_1, s_1] \circ \cdots \circ [r_N, s_N].\end{equation} 
Note that both sides of the equation are elements of $[\pi_1(U)]_2$. Therefore \eqref{prodrs=prodab} holds in $[\pi_1(U)]_2/[\pi_1(U)]_3$. 
Under the inverse of the isomorphism $C$, \eqref{prodrs=prodab} becomes 
$\sum_{i=1}^g \bar{a}_i \wedge \bar{z}_i = \sum_{i=1}^N \bar{r}_i \wedge \bar{s}_i$ in 
$\rH_1(U) \wedge \rH_1(U)$.
\end{proof}

The next lemma is key for simplifying later calculations.  

\begin{lemma} \label{Lsimplify}
Suppose $\alpha, \beta, \gamma \in \pi_1(U)$. 
\begin{enumerate}
\item If $\alpha \gamma \in [\pi_1(U)]_2$, then $\gamma \alpha \in [\pi_1(U)]_2$, and
$\alpha \gamma$ and $\gamma \alpha$ have the 
same image in $[\pi_1(U)]_2/[\pi_1(U)]_3$.

\item If $\gamma^{-1} \alpha \gamma \beta \in [\pi_1(U)]_2$, then $\alpha \beta \in [\pi_1(U)]_2$, and
the difference between the images of $\gamma^{-1} \alpha \gamma \beta$ and $\alpha \beta$ in 
$[\pi_1(U)]_2/[\pi_1(U)]_3$
is $\gamma \wedge (-\alpha)$.

\end{enumerate}
\end{lemma}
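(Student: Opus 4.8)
The plan is to work throughout in the graded Lie algebra ${\rm gr}(\pi_1(U))$, exploiting the fact that $[\pi_1(U)]_2/[\pi_1(U)]_3$ is abelian and that the commutator map induces the Lie bracket $[\pi_1(U)]_1/[\pi_1(U)]_2 \times [\pi_1(U)]_1/[\pi_1(U)]_2 \to [\pi_1(U)]_2/[\pi_1(U)]_3$. For part (1), I would use the standard commutator identity $\gamma\alpha = (\gamma\alpha\gamma^{-1}\alpha^{-1})\,\alpha\gamma = [\gamma,\alpha]\,(\alpha\gamma)$. If $\alpha\gamma \in [\pi_1(U)]_2$, then since $[\gamma,\alpha] \in [\pi_1(U)]_2$ as well, we get $\gamma\alpha \in [\pi_1(U)]_2$; and modulo $[\pi_1(U)]_3$ the two elements differ by the class of $[\gamma,\alpha]$, which under the bracket description is $\bar\gamma \wedge \bar\alpha$ where $\bar\gamma, \bar\alpha$ are the images in $\rH_1(U)$. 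But $\alpha\gamma \in [\pi_1(U)]_2$ forces $\bar\alpha + \bar\gamma = 0$ in $\rH_1(U)$, i.e. $\bar\gamma = -\bar\alpha$, so $\bar\gamma \wedge \bar\alpha = (-\bar\alpha)\wedge\bar\alpha = 0$. Hence $\alpha\gamma$ and $\gamma\alpha$ have the same image in $[\pi_1(U)]_2/[\pi_1(U)]_3$.

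For part (2), I would similarly manipulate $\gamma^{-1}\alpha\gamma\beta$. Write $\gamma^{-1}\alpha\gamma = \alpha\,(\alpha^{-1}\gamma^{-1}\alpha\gamma) = \alpha\,[\alpha^{-1},\gamma^{-1}]$, so that $\gamma^{-1}\alpha\gamma\beta = \alpha\,[\alpha^{-1},\gamma^{-1}]\,\beta$. Since $[\alpha^{-1},\gamma^{-1}] \in [\pi_1(U)]_2$, the hypothesis $\gamma^{-1}\alpha\gamma\beta \in [\pi_1(U)]_2$ gives $\alpha\beta \in [\pi_1(U)]_2$. To compare images in the graded quotient, I need to move the middle commutator past $\beta$: using part (1) (or directly that conjugation acts trivially on the abelian quotient $[\pi_1(U)]_2/[\pi_1(U)]_3$), the element $[\alpha^{-1},\gamma^{-1}]\,\beta$ and $\beta\,[\alpha^{-1},\gamma^{-1}]$ have the same class, so $\gamma^{-1}\alpha\gamma\beta \equiv \alpha\beta\,[\alpha^{-1},\gamma^{-1}] \pmod{[\pi_1(U)]_3}$. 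The difference of the images of $\gamma^{-1}\alpha\gamma\beta$ and $\alpha\beta$ is thus the class of $[\alpha^{-1},\gamma^{-1}]$, which under the bracket/wedge identification is $(-\bar\alpha)\wedge(-\bar\gamma) = \bar\alpha \wedge \bar\gamma = \bar\gamma \wedge (-\bar\alpha)$, as claimed.

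The main subtlety to get right is bookkeeping of signs and the direction of the identification between commutators and wedges: one must be careful whether $[x,y] = xyx^{-1}y^{-1}$ maps to $\bar x \wedge \bar y$ or $\bar y \wedge \bar x$ under the isomorphism $C$, and one must track that inverses in $\pi_1(U)$ become negatives in $\rH_1(U)$. Once the convention is fixed (matching the convention used to define $C$ above), both parts reduce to the elementary commutator identities displayed above together with the observation that the hypothesis in each case pins down a linear relation among the homology classes that makes the relevant wedge term vanish (in part (1)) or simplify (in part (2)). I expect no real obstacle beyond this sign and convention check; the content is genuinely formal, which is exactly why the lemma will be useful for streamlining the later, more involved computation of $T$ as a product of commutators.
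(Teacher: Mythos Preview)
Your argument is correct and a bit slicker than the paper's. For part~(1), the paper writes $\gamma\alpha = \alpha^{-1}(\alpha\gamma)\alpha$, expresses $\alpha\gamma$ as a product of commutators $[c,d]$, and then expands each conjugated wedge $(\alpha^{-1}c\alpha)\wedge(\alpha^{-1}d\alpha)$ directly to see it equals $c\wedge d$. You instead use the one-line identity $\gamma\alpha=[\gamma,\alpha]\,(\alpha\gamma)$ together with the observation that the hypothesis forces $\bar\gamma=-\bar\alpha$ in $\rH_1(U)$, killing $\bar\gamma\wedge\bar\alpha$ immediately. For part~(2), the paper uses the identity $\gamma^{-1}\alpha\gamma=[\gamma^{-1}\alpha\gamma,\gamma]\,\alpha$ and reads off the correction term as $(\gamma^{-1}\alpha\gamma)\wedge\gamma=\alpha\wedge\gamma$; you instead factor $\gamma^{-1}\alpha\gamma=\alpha\,[\alpha^{-1},\gamma^{-1}]$ and move the commutator past $\beta$. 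One small cleanup: your appeal to ``part~(1)'' there is not quite literal (neither $[\alpha^{-1},\gamma^{-1}]\beta$ nor $\beta[\alpha^{-1},\gamma^{-1}]$ need lie in $[\pi_1(U)]_2$), but your parenthetical is exactly right---write $\alpha[\alpha^{-1},\gamma^{-1}]\beta=(\alpha\beta)\cdot\bigl(\beta^{-1}[\alpha^{-1},\gamma^{-1}]\beta\bigr)$ and use that conjugation by $\beta$ acts trivially on $[\pi_1(U)]_2/[\pi_1(U)]_3$. Both approaches are short; yours has the advantage that the same mechanism (insert a commutator, identify its class under $C$) handles both parts uniformly, while the paper's version for part~(1) never needs the relation $\bar\gamma=-\bar\alpha$ and so would apply verbatim to show conjugation-invariance of any element of $[\pi_1(U)]_2$.
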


\begin{proof}

\begin{enumerate}
\item 
Note that $\gamma \alpha = \alpha^{-1} (\alpha \gamma) \alpha$.  
If $\alpha \gamma \in [\pi_1(U)]_2$, then $\gamma \alpha \in [\pi_1(U)]_2$ because the commutator subgroup is normal.
Also $\alpha \gamma$ and $\gamma \alpha$ have the 
same image in $[\pi_1(U)]_2/[\pi_1(U)]_3$ because conjugation acts trivially on $[\pi_1(U)]_2/[\pi_1(U)]_3$.

\item 
In $\pi_1(U)$, 
\[ [\gamma^{-1} \alpha \gamma, \gamma] = ( \gamma^{-1} \alpha \gamma) \gamma ( \gamma^{-1} \alpha^{-1} \gamma) \gamma^{-1}=\gamma^{-1} \alpha \gamma \alpha^{-1}=[\gamma^{-1}, \alpha].\]
So $[\gamma^{-1} \alpha \gamma, \gamma]\alpha \beta = \gamma^{-1} \alpha \gamma \beta$.
In particular, if $\gamma^{-1} \alpha \gamma \beta$ is in $[\pi_1(U)]_2$, then so is $\alpha \beta$.

Since $U$ is affine, $[\pi_1(U)]_2/[\pi_1(U)]_3 \cong \rH_1(U) \wedge \rH_1(U)$.
In $[\pi_1(U)]_2/[\pi_1(U)]_3$, the difference between the images of 
$\gamma^{-1} \alpha \gamma \beta$ and $\alpha \beta$ 
is the image of $[\gamma^{-1} \alpha \gamma, \gamma]$.
Since $[\gamma^{-1} \alpha \gamma, \gamma]=[\gamma^{-1}, \alpha]$, this image
is $-\gamma \wedge \alpha$, which equals $\gamma \wedge (-\alpha)$.
\end{enumerate}
\end{proof}

\subsection{Elements of the fundamental groupoid} \label{Sgroupoid}

Recall that $U:=U(\mathbb{C})=\{(x,y) \mid x^n + y^n =1\}$ and 
$Y$ is the set of $2n$ points such that $xy=0$. 
We compute in the fundamental groupoid $\pi_1(U, Y)$ of $U$ with respect to the base points in $Y$. 
Let $\beta$ be the path in $U$, which begins at the base point $b_0=b=(0,1)$ and ends at $d_0=(1,0)$, 
given by 
\begin{equation} \label{Ebeta}
\beta = (\sqrt[n]{t}, \sqrt[n]{1-t}) \ {\rm for} \ t \in [0,1].
\end{equation}

Throughout this section, let $0 \leq i \leq n-1$ and $0 \leq j \leq n-1$. 
It is sometimes convenient to think of $i$ and $j$ as elements of $\ZZ/n\ZZ$.
Let $b_j = (0, \zeta^j)$ and $d_i = (\zeta^i, 0)$.
Consider the automorphisms $\epzpi, \epopi \in {\rm Aut}(U)$ defined by 
$\epzpi(x,y) = (\zeta x, y)$ and $\epopi(x,y)=(x, \zeta y)$.  
Consider the path in $U$, which begins at $b_j$ and ends at $d_i$, given by
\begin{equation} \label{Ebetaij}
e_{i,j} = \epzpi^i \epopi^j \beta.
\end{equation}


Consider the loop $E_{i,j}$ in $U$, formed by the composition of four paths, 
where path composition is written from left to right:
\[E_{i,j} = e_{0,0} \circ  (e_{0,j})^{-1} \circ e_{i,j} \circ (e_{i,0})^{-1}.\]
Then $E_{i,j}$ proceeds through the following points: 
\[b_0 \mapsto d_0 \mapsto b_j \mapsto d_i \mapsto b_0.\]

If $ij=0$, then $E_{i,j}$ is trivial in the fundamental groupoid.  
The converse is also true; see Lemma \ref{LfactE} below.

\section{A formula for the classifying element} \label{S3}

The main goal of this section is to find a formula for $(c_0, c_1, \ldots, c_{n-1})^{-1}$
in terms of the elements $E_{i,j}$ in the fundamental groupoid $\pi_1(U, Y)$.
The formula is stated in Section~\ref{Sliftstar} and proved to be correct in Proposition~\ref{PS=C}.
The reason for finding this formula is that 
\[T=\prod_{i=1}^g [a_i, z_i]=(c_0 \circ c_1 \circ \cdots c_{n-1})^{-1}\] 
is in the class of the boundary of a disk in the Fermat curve 
that contains $Z_0$, the set of $n$ points where $z=0$.
At the end of the section, in Proposition~\ref{Pwrapin}, we analyze the ordering of the loops $E_{i,j}$ in $T$ combinatorially.
In Section~\ref{Smain}, we will use the material in this section and Section~\ref{Ssecondquot} 
to find an explicit formula for the 
element $\Delta \in \rH_1(U) \wedge \rH_1(U)$ whose image in $\rH_1(X) \wedge \rH_1(X)$ is $\rho$,
in terms of a basis on which we understand the action of the absolute Galois group.

\subsection{Sheets of a cover}

Let $V=\mathbb{A}^1(\mathbb{C})$.
Consider the map $\wp: U \to V$ given by $(x,y) \mapsto x$.
Let $\zeta=e^{2 \pi i/n}$.
Then $\wp$ is a $\mu_n$-Galois cover, where the generator $\zeta$ of $\mu_n$ acts via 
$\zeta(x,y)=(x,\zeta y)$.

The cover $\wp$ is ramified at 
$\{(x,y)=(\zeta^i,0) \mid i = 0, \ldots, n-1\}$ and 
branched at $\{x=\zeta^i \mid i = 0, \ldots, n-1\}$.
The pre-images of $x=0$ in $U$ are the points $b_j=(0,\zeta^j)$ for $0 \leq j \leq n-1$. 

The equation for the curve is $y^n=f(x)$ where $f(x) = -\prod_{i=0}^{n-1} (x - \zeta^i)^1$ is separable.
This implies that the inertia type of $\wp$ is the $n$-tuple $(1, 1, \ldots, 1)$. 
This means that the canonical generator of inertia at each ramification point is $\zeta$.
In other words, the chosen generator of the Galois group of $\wp$ acts on a uniformizer at 
each ramification point by the same root of unity $\zeta$.

Let $w_i$ be the path in $V$ given by $x=\zeta^i \sqrt[n]{t}$ for $t \in [0,1]$.
Let $V^\circ = V - \{w_i \mid 0 \leq i \leq n-1\}$.
Let $U^\circ = \wp^{-1} (V^\circ)$.
The restriction of $\wp$ to $U^\circ$ is unramified.
This is because the monodromy around each root of unity is multiplication by $\zeta$, 
so a loop going around all $n$ of the roots of unity is multiplication by $\zeta^n$, which is trivial. 
Therefore, the monodromy action of $\pi_1(V^\circ)$ is trivial on $U^\circ$, 
proving that the restriction of $\wp$ to $U^\circ$ is unramified.
Thus $U^\circ$ is a disjoint union of $n$ sheets.

In order to label these sheets, we define the following notation, for $0 \leq i \leq n-1$.
Let $r_i$ be the ray $x=\zeta^i \sqrt[n]{t}$ for $t \in [0,\infty)$.
We define the angular segment, or sector, $R_i$ to be the intersection of a small neighborhood of $x=0$ in $V^\circ$ 
with the segment of $V^\circ$ bounded by $r_{i-1 \bmod n}$ and $r_{i}$. 
In other words, for some small $\epsilon_\circ \in {\mathbb R}^{>0}$, 
\[R_i = \{x =re^{I \theta} \in V^{\circ} \mid (i-1) \frac{2 \pi}{n} < \theta < i \frac{2 \pi}{n}, 
\ 0 < r < \epsilon_\circ\}.\]
For $0 \leq i,j \leq n-1$, 
we denote by $\tilde{R}_{i,j}$ the intersection of a small neighborhood of $b_j=(0,\zeta^j)$ with $U^{\circ} \cap \wp^{-1}(R_i)$.
We label by $U_k$ the sheet containing $\tilde{R}_{k,0}$, for $0 \leq k \leq n-1$.

Thus our small neighborhood of $b_j=(0, \zeta^j)$ intersected with $U^\circ$ is the disjoint union 
of the $n$ neighborhoods $\tilde{R}_{i,j}$ for $0 \leq i \leq n-1$.
Since we may choose to base fundamental groups at a point $b_j$ or at a simply connected neighborhood of $b_j$, we can think of $b_j$ as a base point divided into $n$ pieces, one piece on 
each sheet $U_k$, or as $n$ different tangential basepoints.

In Lemma~\ref{Lwhichsheet}, we determine the value of $k$ such that the sheet $U_k$ contains 
$\tilde{R}_{i,j}$ when $j \not = 0$.

\subsection{The cusps with $z=0$}

Recall that $Z_0$ is the set of $n$ points of $X$ where $z=0$.
Let $\epsilon = e^{\pi I/n}$ be a primitive $n$th root of $-1$.
The points of $Z_0$ have projective coordinates 
$z_k = [\zeta^{k} \colon \epsilon \colon 0]$ for $0 \leq k \leq n-1$.
The unramified cover $U^\circ \to V^\circ$ extends to an unramified cover 
$U^\circ \cup Z_0 \to V^{\circ} \cup \{\infty\}$.
The next result shows that the boundary of the sheet $U_k$ contains exactly one point of $Z_0$. 
 
\begin{lemma} \label{Lpointinfinity}
The point $z_k$ is contained in the boundary of the sheet $U_k$.
\end{lemma}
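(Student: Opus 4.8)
The plan is to track, explicitly and carefully, how a small loop near $x=0$ lifts through the branched cover $\wp\colon U^\circ \cup Z_0 \to V^\circ \cup \{\infty\}$, and to match the sheet-labelling convention of the previous subsection against the description of the cusps $z_k = [\zeta^k \colon \epsilon \colon 0]$. Recall that the sheet $U_k$ is defined to be the one containing the region $\tilde R_{k,0}$, i.e. the component of $\wp^{-1}(R_k)$ lying in a small neighborhood of $b_0 = (0,1)$, where $R_k$ is the angular sector $(k-1)\tfrac{2\pi}{n} < \theta < k\tfrac{2\pi}{n}$ near $x=0$. So first I would write down, for $x = r e^{I\theta}$ with $r$ small and $\theta$ in the sector defining $R_k$, the explicit value of $y$ on the sheet $U_k$: near $b_0=(0,1)$ we have $y = (1-x^n)^{1/n}$ for the branch with $y \approx 1$, and the sheet $U_k$ is obtained from that germ by following $R_0, R_1, \dots, R_k$ around $x=0$, which multiplies $y$ by the appropriate power of the local monodromy. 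The key computation is that going once around $x=0$ (through all $n$ sectors) corresponds, on the cover $U \to V$, to the deck transformation fixing the fiber over $x=0$; I need to identify which deck transformation this is and hence pin down $y$ on $U_k$ as a function of $(r,\theta)$.

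Next I would analyze the behaviour as $x \to \infty$ along a ray inside the sector of $U_k$ — equivalently, examine the chart at $z=0$. Passing to projective coordinates $[x:y:z]$ and setting $z \to 0$, the point of $Z_0$ in the closure of $U_k$ is $\lim_{x\to\infty}[x : y(x) : 1] = [1 : \lim_{x\to\infty} y(x)/x : 0]$. Since $x^n + y^n = 1$ forces $y^n/x^n \to -1$, the limit of $y/x$ is an $n$th root of $-1$, i.e. $\epsilon^{2\ell+1}$ for some $\ell$ depending on the sheet; the content of the lemma is that this root is exactly $\epsilon$ (up to the stated normalization $z_k = [\zeta^k \colon \epsilon \colon 0]$, after scaling by $\zeta^k$). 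So the real task is to show the sheet index $k$ appearing in the labelling $U_k$ (via the sector $R_k$ near $x=0$) matches the index $k$ in $z_k = [\zeta^k \colon \epsilon \colon 0]$. I would do this by transporting the local branch of $y$ from the neighborhood of $x=0$ out to $x=\infty$ along a ray in the sector, carefully bookkeeping the continuous choice of $n$th roots $\sqrt[n]{t}$ and $\sqrt[n]{1-t}$ used to define $\beta$ in \eqref{Ebeta} and the automorphisms $\epzpi,\epopi$; note $e_{i,j} = \epzpi^i\epopi^j\beta$ runs $b_j \mapsto d_i$, and the sheet $U_k$ contains $\tilde R_{k,0}$, which is adjacent to the path $e_{k,0}$ (among the $e_{i,0}$), so the boundary-at-infinity of $U_k$ should be read off from where $e_{k,0}$, extended toward $x=\infty$, limits.

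The main obstacle I anticipate is purely a matter of sign/normalization bookkeeping: making the choices of the fractional powers, the direction of angular rotation, the convention for which sector $R_k$ sits between $r_{k-1}$ and $r_k$ (rather than $r_k$ and $r_{k+1}$), and the meaning of "$\epsilon$ a primitive $n$th root of $-1$" all consistent, so that the exponent comes out as exactly $\epsilon$ and not $\epsilon^{-1}$ or $\epsilon^{2k+1}$. A clean way to handle this is to fix one sheet, say $U_0$ (the one through $\tilde R_{0,0}$, adjacent to $\beta = e_{0,0}$), compute directly that its point at infinity is $z_0 = [1:\epsilon:0]$ by letting $t\to\infty$ (formally) or rather $x\to\infty$ along $r_0/r_{n-1}$ in the chart $y=(1-x^n)^{1/n}$ with the branch normalized at $x=0$, and then use equivariance: the deck transformation $\epzpi\colon (x,y)\mapsto(\zeta x,y)$ carries $U_0$ to $U_1$ (it rotates the sectors $R_i \mapsto R_{i+1}$) and carries $z_0 = [1:\epsilon:0]$ to $[\zeta:\epsilon:0] = z_1$, and inductively $\epzpi^k$ carries $U_0$ to $U_k$ and $z_0$ to $z_k$. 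This reduces the whole lemma to the single base case for $U_0$, which is a short explicit limit computation, plus the observation that the $\mu_n$-action via $\epzpi$ permutes the sheets by $k \mapsto k+1$ and permutes the cusps $z_k$ the same way.
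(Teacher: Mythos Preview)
Your proposal is correct in spirit and lands on essentially the same idea as the paper, but by a much longer route. The paper's proof is a single explicit construction: it writes down the ray
\[
Q_k(t) = \bigl(\epsilon^{-1}\zeta^{k}\sqrt[n]{t},\ \sqrt[n]{1+t}\bigr),\qquad t\in(0,\infty),
\]
checks in one line that $Q_k(t)$ lies on $U$ (since $\epsilon^{-n}=-1$ gives $x^n+y^n=1$), observes that as $t\to 0$ it approaches $b_0=(0,1)$ with $\arg x = (2k-1)\pi/n$ in the sector $R_k$ (hence $Q_k\subset U_k$), and computes $\lim_{t\to\infty} x/y = \epsilon^{-1}\zeta^{k}$, which is exactly the ratio at $z_k=[\zeta^k:\epsilon:0]$. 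That is the entire argument.

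Your final plan --- do the base case $U_0\leftrightarrow z_0$ by an explicit limit and then propagate by the $\epzpi$-equivariance $(x,y)\mapsto(\zeta x,y)$ --- is equivalent, since $Q_k=\epzpi^{\,k}Q_0$; so the equivariance you identify is implicitly what parametrizes the paper's family $Q_k$. What your write-up does not need is the long preamble about monodromy around $x=0$, tracking branches of $(1-x^n)^{1/n}$ through successive sectors, or ``extending $e_{k,0}$ to infinity'' (note $e_{k,0}$ terminates at the ramification point $d_k$, not at a cusp, so that particular heuristic is off). All of that can be replaced by simply naming a ray in the open sector that avoids every slit $w_i$ and reading off its endpoints, as the paper does.
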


\begin{proof}
Consider the ray $Q_k$ in $U^\circ$ given by $(x,y) =(\epsilon^{-1} \zeta^k \sqrt[n]{t}, \sqrt[n]{1+ t})$ for $t \in (0, \infty)$.
As $t \to 0$, it approaches $b=(0,1)$ and it is contained in the sheet $U_k$.  
As $t \to \infty$, the value of $x/y$ on $Q_k$ approaches 
\[{\rm lim}_{t \to \infty} \epsilon^{-1} \zeta^{k} \sqrt[n]{t}/\sqrt[n]{1+t} = \epsilon^{-1} \zeta^{k}.\] 
The point $z_{k}$ is at the end of the ray $Q_k$ and is thus contained in the sheet 
$U_k$.  
\end{proof}

\begin{figure}[h!]
  \caption{}
  \label{figure1}
  \vspace{1pt}
  \fbox{ \includegraphics[width=0.18\textwidth]{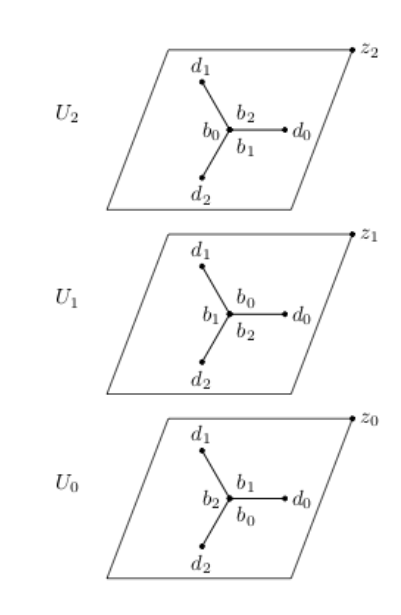}}
\end{figure}

\subsection{Loops in the cover} \label{Sinertiatype}

The information from the inertia type indicates how to glue the sheets $U_k$ together along the 
paths $e_{i,j}$ to obtain a ramified cover of Riemann surfaces.
Recall that the canonical generator of inertia at each ramification point is $\zeta$.

Consider a loop $L_i$ in $V$, with starting point $x=0$, which makes a 
counterclockwise circle around the path $w_i$, starting in the region $R_{i}$ and ending in $R_{i +1}$.
Note that $L_i$ depends only on the value of $i$ modulo $n$.
Define $L_{i,j}$ to be the lift of $L_i$ to a path in $U^\circ$ which starts at the point $b_j=(0, \zeta^j)$.
By the definition of the inertia type, $L_{i,j}$ ends at the point $b_{j+1}=(0, \zeta^{j+1})$. 
Note that $L_{i,j}$ is homotopic to the composition of paths 
\[L_{i,j} = e_{i,j} \circ (e_{i,j+1})^{-1} \colon
(0, \zeta^j) \mapsto (\zeta^i, 0) \mapsto (0, \zeta^{j+1}).\]

The next result determines which of the lifts $\tilde{R}_{i,j}$ of the sector $R_i$ are on each sheet $U_k$.

\begin{lemma} \label{Lwhichsheet}
With notation as above, $\tilde{R}_{i,j} \subset U_k$ if and only if $i-j = k$.
\end{lemma}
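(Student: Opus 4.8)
My plan is to establish the sharper statement that $\tilde R_{i,j}\subset U_{i-j}$ (all indices mod $n$) and then read off the biconditional from the fact that each $\tilde R_{i,j}$ lies in a unique sheet. I would argue by induction on $j$; the base case $j=0$ is exactly the definition $\tilde R_{k,0}\subset U_k$, i.e.\ $\tilde R_{i,0}\subset U_i$ for all $i$.

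The inductive step rests on the loops $L_{i,j}$ together with the inertia type just computed. First I would observe that $L_{i,j}$ connects $\tilde R_{i,j}$ to $\tilde R_{i+1,j+1}$: since $L_{i,j}$ lies over $L_i$, which starts in $R_i$ and ends in $R_{i+1}$, the initial arc of $L_{i,j}$ lies over $R_i$ and its terminal arc lies over $R_{i+1}$; and since $L_{i,j}$ runs from $b_j=(0,\zeta^j)$ to $b_{j+1}=(0,\zeta^{j+1})$ (the latter by the inertia type computation, the canonical generator of inertia at $\zeta^i$ being $\zeta$), the initial arc sits in the component of $U^\circ\cap\wp^{-1}(R_i)$ near $b_j$, namely $\tilde R_{i,j}$, and the terminal arc sits in $\tilde R_{i+1,j+1}$. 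Because $L_{i,j}$ is a connected path and $U^\circ$ is the disjoint union of the sheets $U_k$, the sets $\tilde R_{i,j}$ and $\tilde R_{i+1,j+1}$ must lie in the same sheet. Granting the inductive hypothesis $\tilde R_{i,j}\subset U_{i-j}$ for all $i$, we conclude $\tilde R_{i+1,j+1}\subset U_{i-j}=U_{(i+1)-(j+1)}$, and reindexing gives $\tilde R_{i,j+1}\subset U_{i-(j+1)}$ for all $i$. (Equivalently, one may directly concatenate the paths $L_{i,0}, L_{i+1,1},\dots,L_{i+j-1,j-1}$ to join $\tilde R_{i,0}\subset U_i$ to $\tilde R_{i+j,j}$.)

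Finally, to pass from $\tilde R_{i,j}\subset U_{i-j}$ to the stated ``if and only if,'' I would note that $\wp$ is unramified at $b_j$ — its ramification locus is $\{y=0\}$, while $b_j=(0,\zeta^j)$ has $\zeta^j\neq 0$ — hence a local homeomorphism there, so for a small enough neighborhood $\tilde R_{i,j}$ is homeomorphic to the sector $R_i$ and in particular connected, whence it lies in exactly one sheet. Then $\tilde R_{i,j}\subset U_k$ iff $U_k=U_{i-j}$ iff $k\equiv i-j\pmod n$. (As a bonus the labeling is seen to be legitimate: for fixed $j$ the sets $\tilde R_{0,j},\dots,\tilde R_{n-1,j}$ occupy all $n$ distinct sheets.) I expect the only delicate point to be the bookkeeping in the middle step — verifying that the two ends of $L_{i,j}$ land in the specific corners $\tilde R_{i,j}$ and $\tilde R_{i+1,j+1}$ rather than in some other lifts of $R_i$ and $R_{i+1}$ — which uses the $x$-coordinate data ($R_i\rightsquigarrow R_{i+1}$) from the construction of $L_i$ together with the $y$-coordinate data ($b_j\rightsquigarrow b_{j+1}$) from the inertia type; the rest is formal.
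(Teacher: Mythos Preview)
Your proposal is correct and follows essentially the same route as the paper: both use the definition $\tilde R_{k,0}\subset U_k$ as the base case and the observation that the lift $L_{i,j}$, being a connected path in $U^\circ$, forces $\tilde R_{i,j}$ and $\tilde R_{i+1,j+1}$ to lie in the same sheet. Your version is more explicit --- you spell out the induction on $j$, justify why the ends of $L_{i,j}$ land in the correct $\tilde R$'s, and address the biconditional --- but the underlying argument is identical to the paper's terse three-line proof.
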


\begin{proof}
By definition, $\tilde{R}_{k, 0} \subset U_k$.  
The path $L_{i,j}$ is contained in a unique sheet.
Since $L_{i,j}$ starts in $\tilde{R}_{i, j}$ and ends in $\tilde{R}_{i+1, j+1}$, 
then these neighborhoods are contained in the same sheet.
\end{proof}

Note that $U^\circ = U - \{e_{i,j} \mid 0 \leq i,j \leq n-1\}$.
In order to reconstruct the ramified cover of Riemann surfaces $U \to V$, 
we need to glue the sheets $\{U_k\}_{0 \leq k \leq n-1}$ together along the missing segments;
specifically, we glue $R_{i,j}$ and $R_{i+1, j}$ together along the edge $e_{i,j}$ for $0 \leq i,j \leq n-1$.

\subsection{Lifting of a star shape} \label{Sliftstar}

Recall that path composition is written from left to right.
For $0 \leq \ell \leq n-1$, define a loop in $V$ by:
\begin{equation} \label{EdefS}
S_{\ell}=L_{n-\ell} \circ L_{n-\ell + 1} \circ \cdots \circ L_{n-\ell + (n -1)}.
\end{equation} 
Each loop $S_\ell$ traces in a counterclockwise direction 
along the outside of the slits $\{w_i\}$ and forms an $n$-pointed star shape.  
Each is homotopic to a large circle in $V^\circ$ traced in a counterclockwise direction.

\begin{definition} \label{Dstarlift}
For $0 \leq \ell \leq n-1$, let $\tilde{S}_\ell$ denote the unique lift under $\wp$ of $S_{\ell}$ 
to a loop in $U$ with starting point $b_0=(0,1)$.
Let $\tilde{S}:=\tilde{S}_0 \circ \tilde{S}_{1} \cdots \circ \tilde{S}_{n-1}$.
\end{definition}

By the proof of Lemma~\ref{Lwhichsheet}, $\tilde{S}_{\ell}$ is contained in $U_{n-\ell}$.
Later, we will see that $\tilde{S} \in [\pi_1(U)]_2$; see Remark \ref{RSandgamma}.  

\begin{lemma} \label{Lorderloop}
For $0 \leq \ell \leq n-1$, the loop $\tilde{S}_\ell$ is homotopic to
\[\tilde{S}_\ell= e_{-\ell, 0} \circ (e_{-\ell, 1})^{-1} \circ e_{-\ell+1, 1} \circ (e_{-\ell + 1, 2})^{-1} \circ \cdots \circ 
e_{-\ell + n-1, n-1} \circ (e_{-\ell + n-1, 0})^{-1},\]
or, equivalently,
$L_{n-\ell,0} \circ L_{n-\ell+1, 1} \circ L_{n-\ell+2,2} \circ \cdots \circ L_{n-\ell+(n-1), n-1}$.
\end{lemma}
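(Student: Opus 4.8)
The plan is to compute the lift $\tilde{S}_\ell$ directly from its definition as a composition of the loops $L_i$, using the two facts already established: that $S_\ell = L_{n-\ell} \circ L_{n-\ell+1} \circ \cdots \circ L_{n-\ell+(n-1)}$ by \eqref{EdefS}, and that each $L_{i,j}$ (the lift of $L_i$ starting at $(0,\zeta^j)$) is homotopic to $e_{i,j} \circ (e_{i,j+1})^{-1}$, as recorded just before this subsection. The key observation is a bookkeeping one: the lift of a composition of loops in $V$ is obtained by lifting each factor in turn, where the starting point of each successive lift is the endpoint of the previous one. Since $L_{i,j}$ ends at $(0,\zeta^{j+1})$, lifting $S_\ell$ starting at $b_0=(0,1)$ forces the $k$-th factor $L_{n-\ell+k}$ to be lifted as the path $L_{n-\ell+k,\,k}$ beginning at $(0,\zeta^k)$, for $k=0,1,\dots,n-1$.

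First I would write $\tilde{S}_\ell = \tilde{L}_{n-\ell,0} \circ \tilde{L}_{n-\ell+1,1} \circ \cdots \circ \tilde{L}_{n-\ell+(n-1),n-1}$, where $\tilde{L}_{i,j}$ denotes the lift of $L_i$ starting at $(0,\zeta^j)$; this is exactly the path $L_{i,j}$ of the previous subsection, so uniqueness of lifts identifies $\tilde{S}_\ell$ with this concatenation. Then I would substitute $L_{i,j} \simeq e_{i,j} \circ (e_{i,j+1})^{-1}$ into each factor. Writing $i = -\ell + k$ (recalling $e_{i,j}$ depends only on $i,j$ modulo $n$, so $n-\ell+k$ and $-\ell+k$ give the same path), the $k$-th factor becomes $e_{-\ell+k,\,k} \circ (e_{-\ell+k,\,k+1})^{-1}$, and stringing these together for $k=0,\dots,n-1$ yields precisely
\[
\tilde{S}_\ell \simeq e_{-\ell,0} \circ (e_{-\ell,1})^{-1} \circ e_{-\ell+1,1} \circ (e_{-\ell+1,2})^{-1} \circ \cdots \circ e_{-\ell+n-1,n-1} \circ (e_{-\ell+n-1,0})^{-1},
\]
where in the last factor the second index $n$ has been reduced to $0$. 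I would also note in passing that this shows $\tilde{S}_\ell$ lies in the single sheet $U_{n-\ell}$, consistent with the remark preceding the lemma, since each $\tilde{R}_{-\ell+k,\,k}$ is in sheet $U_{-\ell}=U_{n-\ell}$ by the sheet-labeling lemma.

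I do not expect a genuine obstacle here: the content is entirely the indexing of which lifted loop starts at which point, and the only care needed is in tracking the second index of $e_{i,j}$ as it increments $j \mapsto j+1$ across the $n$ factors and wraps around modulo $n$ at the end. The one point worth stating carefully is why the endpoint bookkeeping closes up — the endpoint of the last factor $L_{n-\ell+(n-1),\,n-1}$ is $(0,\zeta^{n}) = (0,1) = b_0$, so $\tilde{S}_\ell$ is indeed a loop based at $b_0$ as claimed in Definition \ref{Dstarlift}, which also serves as a consistency check on the computation.
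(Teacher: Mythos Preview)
Your proposal is correct and follows essentially the same approach as the paper's proof: both lift $S_\ell$ factor by factor, use $L_{i,j}\simeq e_{i,j}\circ(e_{i,j+1})^{-1}$, and track endpoints to see that the $j$-index increments by one at each step. The only cosmetic difference is that you read the $i$-index increments directly from the definition \eqref{EdefS} of $S_\ell$, whereas the paper phrases the same increment geometrically (the loop circles the next branch point counterclockwise) and identifies the initial edge via the sheet containment $\tilde{S}_\ell\subset U_{n-\ell}$ rather than from the first factor $L_{n-\ell}$ of $S_\ell$.
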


\begin{proof}
The loop $\tilde{S}_\ell$ is homotopic to the composition of $2n$ of the edges $e_{i,j}$ and $(e_{i,j})^{-1}$.
Because of the inertia type of $\wp$, this composition involves 
loops of the form $L_{i,j}=e_{i,j} \circ (e_{i,j+1})^{-1}$.
The condition that $\tilde{S}_\ell$ is contained in $U_{n-\ell}$ implies that its
initial edge is the path $e_{-\ell, 0}$ from $(0,1)$ to $(\zeta^{-\ell},0)$.
Thus the initial loop is $L_{n-\ell,0}=e_{-\ell, 0} \circ (e_{-\ell, 1})^{-1}$.
Consider the loop $L_{i',j'}$ coming after the loop $L_{i,j}$.  
Then $i'=i+1$ because $\tilde{S}_\ell$ circles counterclockwise around the 
point $(\zeta^i, 0)$.
Also $j'=j+1$ because the starting point $(0, \zeta^{j'})$ of $L_{i',j'}$ is the same 
as the ending point $(0, \zeta^{j+1})$ of $L_{i,j}$.
\end{proof}

For example, $\tilde{S}_0$ passes around the points in this order:
\[(0,1) \to (1,0) \to (0, \zeta) \to (\zeta,0) \to (0, \zeta^2) \to \cdots \to (\zeta^{n-1}, 0) \to (0,1),\]
and 
\[\tilde{S}_0 = e_{0,0} \circ (e_{0,1})^{-1} \circ e_{1,1} \circ (e_{1,2})^{-1} \circ \cdots \circ 
e_{n-1, n-1} \circ (e_{n-1, 0})^{-1}.\]




\subsection{Comparison with loops around cusps with $z=0$} \label{Sboundaryloop}

We prove that 
$\tilde{S}$ is path homotopic to the boundary of a disk containing the $n$ cusps of $Z_0$ and that
$\tilde{S}_{\ell}$ can be taken to be the loop $(c_{n-\ell})^{-1}$ in a standard presentation of $\pi_1(U)$ 
(a presentation of the form~\eqref{pi1Upresentation}).
For $0 \leq j \leq n-1$, let $z_j=[\zeta^j \colon \epsilon \colon 0]$.  

\begin{proposition} \label{Pwhichpoint}
The loop $\tilde{S}_\ell$ is homotopic to the clockwise loop bounding a disk containing $z_{n - \ell}$.
\end{proposition}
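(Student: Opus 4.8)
### Proof proposal

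The plan is to identify $\tilde S_\ell$ with a small clockwise loop around exactly one cusp of $Z_0$ by combining the geometric description of $\tilde S_\ell$ as the lift of a large counterclockwise circle in $V$ with the analysis of where the sheets $U_k$ meet $Z_0$. Recall from Lemma \ref{Lorderloop} that $\tilde S_\ell$ is contained in the sheet $U_{n-\ell}$, and that $S_\ell$ is homotopic (in $V$) to a large counterclockwise circle around all the branch points $x=\zeta^i$. Adding the point at infinity, $V^\circ \cup \{\infty\}$ is topologically a sphere, and a large counterclockwise circle around all the $\zeta^i$ is homotopic, on that sphere, to a small \emph{clockwise} circle around $\infty$. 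Pulling back along the extended unramified cover $U^\circ \cup Z_0 \to V^\circ \cup \{\infty\}$ of Lemma \ref{Lpointinfinity} and its surrounding discussion, the lift $\tilde S_\ell$ of $S_\ell$ starting on the sheet $U_{n-\ell}$ is then homotopic to a small clockwise loop around the unique point of $Z_0$ lying in the boundary of $U_{n-\ell}$.

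First I would make precise the claim that $S_\ell$, viewed in $V^\circ \cup \{\infty\} \cong S^2$, is homotopic to a small loop around $\infty$ traced clockwise; this is just the statement that on a sphere the complement of a disc containing all the finite branch points is a disc around $\infty$, with the induced orientation reversed, together with the observation from \eqref{EdefS} that $S_\ell$ is precisely such an outer star-shaped loop traced counterclockwise. Second, I would invoke the fact that the cover extended over $\infty$ is unramified there (stated just before Lemma \ref{Lpointinfinity}), so that a small loop around $\infty$ lifts, on each sheet, to a small loop around the single point of $Z_0$ on that sheet. Third, by Lemma \ref{Lpointinfinity}, the point of $Z_0$ on sheet $U_k$ is $z_k$; since $\tilde S_\ell \subset U_{n-\ell}$, the relevant cusp is $z_{n-\ell}$. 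Finally I would check that the orientation is preserved by the lift (covering maps are local orientation-preserving homeomorphisms away from ramification), so the counterclockwise loop around $\infty$ downstairs lifts to a counterclockwise loop around $z_{n-\ell}$ upstairs; combined with the orientation reversal from the sphere's outside-in identification, $\tilde S_\ell$ bounds a disc containing $z_{n-\ell}$ traced clockwise.

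The main obstacle I anticipate is bookkeeping the orientations carefully and justifying the homotopy \emph{within the right space}: the homotopy of $S_\ell$ to a loop around $\infty$ takes place in $V^\circ \cup \{\infty\}$, not in $V^\circ$, so one must argue that it lifts to a homotopy in $U^\circ \cup Z_0$ rather than just $U^\circ$. This is where the hypothesis that the extended cover $U^\circ \cup Z_0 \to V^\circ \cup \{\infty\}$ is unramified is essential, as it guarantees the homotopy lifting property across $\infty$ and its preimages. A secondary subtlety is ensuring the base point is handled correctly: $\tilde S_\ell$ is based at $b_0=(0,1)$, which lies over $x=0$, not over $\infty$, so the homotopy to a clockwise loop around $z_{n-\ell}$ should be understood as a free homotopy of loops, which suffices for the intended application since the ultimate goal is only to compute the image of $\tilde S_\ell$ in homotopy and homology up to the relevant filtration.
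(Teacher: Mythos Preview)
Your proposal is correct and follows essentially the same route as the paper: identify $S_\ell$ as a counterclockwise loop around the finite branch points, hence a clockwise loop around $\infty$, lift via the sheet $U_{n-\ell}$, and invoke Lemma~\ref{Lpointinfinity} to pin down the cusp as $z_{n-\ell}$. The only cosmetic difference is that the paper avoids the homotopy-lifting discussion by observing directly that the restriction $\wp|_{U_{n-\ell}}\colon U_{n-\ell}\to V^\circ$ is a homeomorphism, so the homotopy class of $\tilde S_\ell$ in $U_{n-\ell}\cup\{z_{n-\ell}\}$ is read off immediately from that of $S_\ell$ in $V^\circ\cup\{\infty\}$; this sidesteps your (correctly flagged) concern about lifting homotopies across $\infty$ and about basepoints.
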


\begin{proof}
Note that $V^\circ$ is homeomorphic to ${\mathbb A}^1 - \{0\}$ and $S_{\ell}$
is homotopic to a counterclockwise loop around $0$.  
Thus $S_{\ell}$ is homotopic to a clockwise loop around $\infty$.
By definition, the lift $\tilde{S}_{\ell}$ of $S_{\ell}$ is a loop in $U_{n-\ell}$.
The restriction of $\wp$ to $U_{n-\ell}$ yields a homeomorphism $U_{n-\ell} \to V^\circ$.
Thus $\tilde{S}_{\ell}$ is a clockwise loop around the point missing from $U_{n-\ell}$.
By Lemma \ref{Lpointinfinity}, this point is $z_{n-\ell}$. 
\end{proof}
 
\begin{proposition} \label{PS=C}
The loop $\tilde{S}$ is homotopic to the 
boundary of a disk in the Fermat curve which contains the $n$ points where $z=0$;
it follows that we may take a presentation of the form \eqref{pi1Upresentation} where $\tilde{S}_\ell =(c_{n-\ell})^{-1}$ and  $\tilde{S}=(c_1 \circ \cdots \circ c_{n-1} \circ c_0)^{-1}$.
\end{proposition}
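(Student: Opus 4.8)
The plan is to combine the edge-path formula for $\tilde{S}_\ell$ from \cref{Lorderloop} with the identification $L_{i,j} = e_{i,j} \circ (e_{i,j+1})^{-1}$, and then observe that the concatenation $\tilde{S} = \tilde{S}_0 \circ \cdots \circ \tilde{S}_{n-1}$ telescopes. Writing each $\tilde{S}_\ell$ out, the terminal edge $(e_{-\ell+n-1,0})^{-1} = (e_{-\ell-1,0})^{-1}$ of $\tilde{S}_\ell$ is the inverse of the initial edge $e_{-(\ell+1),0}$ of $\tilde{S}_{\ell+1}$, so in the composition these cancel up to homotopy. After cancellation, the whole loop $\tilde{S}$ is homotopic to a loop that starts at $b_0$, runs along $e_{0,0}$ to $d_0=(1,0)$, then circles \emph{all} of the ramification fibers in counterclockwise order exactly once before returning along $(e_{0,0})^{-1}$; equivalently, pushing down via $\wp$, the image loop $S_0 \circ \cdots \circ S_{n-1}$ is homotopic in $V^\circ$ to a loop winding once clockwise around $\infty$, and its lift sits in the single sheet-independent boundary piece. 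I would make precise that $\tilde{S}$ is homotopic to the boundary of a disk: $V^\circ \cup \{\infty\}$ is a disk (a neighborhood of $\infty$ in $\PP$) whose preimage in $U^\circ \cup Z_0$ under the unramified extension $U^\circ \cup Z_0 \to V^\circ \cup \{\infty\}$ of \cref{Lpointinfinity} is a disjoint union of disks, one around each $z_k$; the loop $\tilde{S}$ bounds the complement of the union of these small disks inside a large disk in $X$, hence bounds a disk containing exactly the $n$ points $z_0,\dots,z_{n-1}$.

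Concretely, here are the steps in order. First, I would expand $\tilde{S} = \tilde{S}_0 \circ \cdots \circ \tilde{S}_{n-1}$ using \cref{Lorderloop} and check the telescoping cancellation of the boundary edges $e_{-\ell-1,0}$ at each junction, reducing $\tilde{S}$ up to homotopy to $e_{0,0} \circ (\text{loop around all } d_i\text{'s}) \circ (e_{0,0})^{-1}$ where the middle loop is, sheet by sheet, the lift of a single large counterclockwise circle $S_0 \circ \cdots \circ S_{n-1}$ in $V$. Second, I would use \cref{Pwhichpoint}: each $\tilde{S}_\ell$ is a clockwise loop bounding a disk around $z_{n-\ell}$ in $X$; since $\ell \mapsto n-\ell$ is a bijection of $\{0,\dots,n-1\}$, the concatenation bounds a disk containing all of $z_0,\dots,z_{n-1}$, with the orientation matching the boundary of the disk $U^\circ \cup Z_0 \to V^\circ \cup \{\infty\}$ seen from the complement of the $U^\circ$ chart. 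Third, I would invoke the classical topological fact that on the genus-$g$ surface $X(\mathbb{C})$ with the $n$ punctures of $Z_0$ removed, the boundary of a small disk containing all $n$ punctures can be written, in a suitable standard presentation \eqref{pi1Upresentation}, as $\prod_{i=1}^g[a_i,b_i] \cdot \prod_{j=0}^{n-1} c_j$ with $c_j$ a small loop around the $j$-th puncture; this is precisely the relator in \eqref{pi1Upresentation}, oriented so that $T = (c_0 \circ \cdots \circ c_{n-1})^{-1}$. Matching orientations, we may choose the presentation so that $\tilde{S}_\ell = (c_{n-\ell})^{-1}$ and therefore $\tilde{S} = (c_1 \circ c_2 \circ \cdots \circ c_{n-1} \circ c_0)^{-1}$, reindexing the product by $\ell \mapsto n-\ell$.

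The main obstacle I anticipate is bookkeeping the orientations and the precise indexing: verifying that the telescoping cancellation is genuinely a \emph{path} homotopy (not merely a free homotopy) rel the base point $b_0$, and that the clockwise/counterclockwise conventions inherited from \cref{Pwhichpoint}, from the definition \eqref{EdefS} of $S_\ell$ as a star traced counterclockwise, and from the standard-presentation relator $\prod[a_i,b_i]\prod c_j$ all line up to give exactly $\tilde{S}_\ell = (c_{n-\ell})^{-1}$ rather than $c_{n-\ell}$ or some cyclically permuted variant. The geometric content — that $\tilde{S}$ bounds a disk containing the $n$ cusps — is essentially forced by \cref{Pwhichpoint} and the fact that the large loop in $V$ bounds a disk around $\infty$ whose preimage decomposes as disjoint disks; the delicate part is packaging this so that the resulting presentation of $\pi_1(U(\mathbb{C}))$ is literally of the form \eqref{pi1Upresentation} with the stated identification, which is what subsequent sections need. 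I would handle the orientation check by working in a fixed picture (Figure \ref{figure1}) and tracking a single reference arc, rather than by a purely combinatorial argument.
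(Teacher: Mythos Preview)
Your telescoping cancellation is correct and is precisely the paper's ``small homotopy adjustment'': rather than cancelling $(e_{-\ell-1,0})^{-1}\circ e_{-\ell-1,0}$ algebraically, the paper pushes the endpoint of each $\tilde{S}_\ell$ across the edge $e_{-\ell-1,0}$ into the next sheet, which has the same effect. The appeal to \cref{Pwhichpoint} to identify $\tilde{S}_\ell$ with a loop around $z_{n-\ell}$, and the concluding step of reading off a presentation of the form~\eqref{pi1Upresentation}, also match the paper.

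The gap is in your argument that the resulting loop bounds a single embedded disk in $X$ containing $Z_0$. Knowing that the preimage of $V^\circ\cup\{\infty\}$ under the unramified cover is $n$ disjoint disks does not by itself give this: those $n$ disks are the sheets $U_k\cup\{z_k\}$, and together they already fill all of $X$ once reglued along the star graph of the $e_{i,j}$, so you still have to explain which sub-disk of $X$ is bounded by $\tilde{S}$. (A related slip: the image $S_0\circ\cdots\circ S_{n-1}$ winds $n$ times around $\infty$, not once, since each $S_\ell$ contributes a full turn.) The paper supplies the missing geometry: after the homotopy, $\tilde{S}$ is a simple closed curve, and the side containing $Z_0$ consists of $n$ wedge-shaped pieces, one per sheet, glued cyclically along the $n$ edges $e_{0,0},\ldots,e_{n-1,0}$; this visibly assembles into a disk (Figure~\ref{figure2}). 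You flag orientation and indexing bookkeeping as the main obstacle, but that is routine once the disk is in hand; the embedded-disk argument is where the real content lies and is what your sketch does not yet provide.
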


In the following proof, it is convenient to use a small ball around $b_0$ as our basepoint $b_0$ (which we may do because balls are simply connected). The intersection of this ball with $\tilde{R}_{i,0}$ will then be called {\em the fractional point of $b_0$ 
in the region $\tilde{R}_{i,0}$.}

\begin{proof}
The loop $\tilde{S}_0$ in $U_0$ starts and ends at the fractional point $b_0$ 
in the region $\tilde{R}_{0,0}$.
With a small homotopy adjustment, the end of $\tilde{S}_0$ can cross the path $e_{n-1,0}$ rather than return to the point $b_0$. 
Since the sheet $U_0$ is glued to the sheet $U_{n-1}$ along the edge $e_{n-1, 0}$, 
this yields an ending point in the region $\tilde{R}_{n-1,0}$ 
near the fractional point $b_0$ in the sheet $U_{n-1}$.

The loop $\tilde{S}_{1}$ in $U_{n-1}$ 
starts and ends at the fractional point $b_0$ 
in the region $\tilde{R}_{n-1,0}$.
With a small homotopy adjustment, the end of $\tilde{S}_{1}$
can cross the path $e_{n-2,0}$ rather than return to the point $b_0$. 
Since the sheet $U_{n-1}$ is glued to the sheet $U_{n-2}$ along the edge $e_{n-2, 0}$, 
this yields an ending point in the region $\tilde{R}_{n-2,0}$ 
near the fractional point $b_0$ in the sheet $U_{n-2}$.

We continue in this way through all the loops $\tilde{S}_0 \circ \tilde{S}_{1} \circ \cdots \circ \tilde{S}_{n-1}$.  Finally, with a small homotopy adjustment, the end of 
$\tilde{S}_{n-1}$ crosses the path $e_{0,0}$ and returns to the region $\tilde{R}_{0,0}$ in $U_0$.  Thus 
$\tilde{S}$ is path homotopic to a loop in the Fermat curve $X$ composed of $n$ paths each contained on a single $U_k$ of the form shown in Figure~\ref{figure2} for $n=3$.

This path divides $X$ into an external and internal piece, where the internal piece
contains the ramification points $\{d_i \mid 0 \leq i \leq n-1\}$, and the external piece contains $Z_0$ and is homeomorphic to a disk.
(To see that the external piece is homeomorphic to a disk, note the following. The external piece is the union of $n$ pieces, each homotopic to a wedge, which are 
glued together along the edges of the wedges.
The picture for $n=3$ is illustrated in Figure~\ref{figure2}.)

Applying Proposition \ref{Pwhichpoint}, it follows that we can take a presentation of the form
\eqref{pi1Upresentation} where $\tilde{S}_\ell =(c_{n-\ell})^{-1}$.
Thus $\tilde{S}=(c_0)^{-1} \circ (c_{n-1})^{-1} \circ  \cdots \circ (c_1)^{-1}=(c_1 \circ \cdots \circ c_{n-1} \circ c_0)^{-1}$.
\end{proof}

\begin{figure}[h!]
  \caption{}
  \label{figure2}
  \vspace{1pt}
 \fbox{ \includegraphics[width=0.21\textwidth]{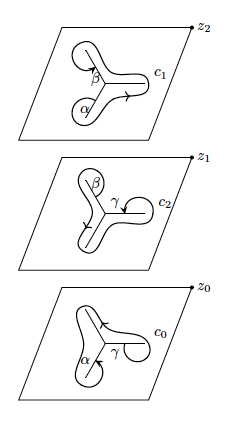}}
\end{figure}

The difference between $(c_0 \circ c_1 \circ \cdots c_{n-1})^{-1}$ 
and $(c_1 \circ \cdots \circ c_{n-1} \circ c_{0})^{-1}$ is not significant by
Lemma~\ref{Lsimplify}(1).

\begin{lemma} \label{LetoE}
Let $S^*$ (resp.\ $S^*_\ell$) be the element of $\pi_1(U)$ obtained by substituting $E_{i,j}$ for the path $e_{i,j}$ in $\tilde{S}$ (resp.\ $\tilde{S}_\ell$).
Then $S^*$ and $\tilde{S}$ (resp.\ $S^*_\ell$ and $\tilde{S}_\ell$) 
are homotopic in $\pi_1(U)$.
\end{lemma}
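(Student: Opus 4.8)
The plan is to exhibit $E_{i,j}$ as the path $e_{i,j}$ with fixed ``connecting paths'' to the basepoint $b_0$ attached at its two ends, and then to telescope. First I would fix, once and for all, the explicit representations of $\tilde S_\ell$ and of $\tilde S$ as composites of the paths $e_{i,j}^{\pm1}$ provided by Lemma \ref{Lorderloop} and Lemma \ref{LfactE2}, writing such a composite as $p_1\circ p_2\circ\cdots\circ p_M$ with each $p_k$ equal to some $e_{i,j}$ or $(e_{i,j})^{-1}$. For a path $p$, let $s(p)$ and $t(p)$ denote its initial and terminal point. Because $\tilde S_\ell$ (resp. $\tilde S$) is a loop based at $b_0$, one has $s(p_1)=t(p_M)=b_0$ and $t(p_k)=s(p_{k+1})$ for $1\le k<M$.

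Next I would introduce connecting paths: for each vertex $v\in\{b_0,\dots,b_{n-1},d_0,\dots,d_{n-1}\}$ a path $\gamma_v$ from $b_0$ to $v$, with $\gamma_{b_0}$ taken to be the constant path; concretely one may take $\gamma_{b_j}=e_{0,0}\circ(e_{0,j})^{-1}$ and $\gamma_{d_i}=e_{i,0}$. The point is that the defining formula for $E_{i,j}$ exhibits it, up to homotopy, as $\gamma_{b_j}\circ e_{i,j}\circ\gamma_{d_i}^{-1}$; since $e_{i,j}$ runs from $b_j$ to $d_i$, this reads $E_{i,j}\simeq\gamma_{s(e_{i,j})}\circ e_{i,j}\circ\gamma_{t(e_{i,j})}^{-1}$, and taking inverses, $(E_{i,j})^{-1}\simeq\gamma_{s((e_{i,j})^{-1})}\circ(e_{i,j})^{-1}\circ\gamma_{t((e_{i,j})^{-1})}^{-1}$. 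So if $P_k$ denotes the result of the substitution on $p_k$ (that is, $P_k=E_{i,j}^{\pm1}$ whenever $p_k=e_{i,j}^{\pm1}$), then in all cases $P_k\simeq\gamma_{s(p_k)}\circ p_k\circ\gamma_{t(p_k)}^{-1}$, with correcting paths that depend only on the endpoints of $p_k$.

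The conclusion is then a telescoping. Since $S^*_\ell$ (resp. $S^*$) is by definition $P_1\circ\cdots\circ P_M$,
\[
S^*_\ell\ \simeq\ \gamma_{s(p_1)}\circ p_1\circ\gamma_{t(p_1)}^{-1}\circ\gamma_{s(p_2)}\circ p_2\circ\gamma_{t(p_2)}^{-1}\circ\cdots\circ\gamma_{s(p_M)}\circ p_M\circ\gamma_{t(p_M)}^{-1}.
\]
Each interior factor $\gamma_{t(p_k)}^{-1}\circ\gamma_{s(p_{k+1})}$ is $\gamma_{t(p_k)}^{-1}\circ\gamma_{t(p_k)}$, hence null-homotopic, so the right-hand side is homotopic to $\gamma_{s(p_1)}\circ(p_1\circ\cdots\circ p_M)\circ\gamma_{t(p_M)}^{-1}=\gamma_{b_0}\circ\tilde S_\ell\circ\gamma_{b_0}^{-1}$, which equals $\tilde S_\ell$ because $\gamma_{b_0}$ is constant. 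The identical argument applied to the composite for $\tilde S$ from Lemma \ref{LfactE2} gives $S^*\simeq\tilde S$; alternatively, $S^*\simeq\tilde S$ follows from the statements for the $\tilde S_\ell$ together with $\tilde S=\tilde S_0\circ\cdots\circ\tilde S_{n-1}$ and $S^*=S^*_0\circ\cdots\circ S^*_{n-1}$.

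I do not expect a serious obstacle here; the argument is bookkeeping. The two points to watch are that the substitution must be understood to send an inverted occurrence $(e_{i,j})^{-1}$ to $(E_{i,j})^{-1}$ (this is what makes $P_k\simeq\gamma_{s(p_k)}\circ p_k\circ\gamma_{t(p_k)}^{-1}$ hold uniformly, including for the degenerate symbols $E_{i,j}$ with $ij=0$), and that one uses $\gamma_{b_0}\simeq\mathrm{const}$ so that the outermost conjugation in the telescoped word disappears.
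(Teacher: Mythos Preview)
Your proposal is correct and is essentially the same argument as the paper's, just spelled out in full detail: the paper's proof fixes exactly your connecting paths $\gamma_{b_j}=e_{0,0}\circ e_{0,j}^{-1}$ and $\gamma_{d_i}=e_{i,0}$ and then says that inserting a trip back to $b_0$ and out again between consecutive edges does not change the homotopy class, which is your telescoping step. Your added remarks about sending $(e_{i,j})^{-1}$ to $(E_{i,j})^{-1}$ and taking $\gamma_{b_0}$ constant are the implicit bookkeeping the paper leaves to the reader.
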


\begin{proof}
We fix the path $e_{0,0} \circ e_{0,j}^{-1}$ from $b_0$ to $b_j$, the initial point of $e_{i,j}$.
We fix the path $e_{i,0}$ from $b_0$ to $d_i$, the final point of $e_{i,j}$.
In the composition of paths, after completing each path $e_{i,j}$, 
we return to the base point $b_0$ and then return to the initial point of the next path.
This does not change the homotopy class.
The statement can also be proven using algebraic cancelation.
\end{proof}

\subsection{Combinatorial analysis}

By Proposition~\ref{PS=C}, the loop $\tilde{S}$ is homotopic to the 
boundary of a disk in the Fermat curve which contains the $n$ points where $z=0$.
By Lemma~\ref{LetoE}, the loop $S^*$ also has this property.
So the goal is to find the image of $S^*$ in $[\pi_1(U)]_2/[\pi_1(U)]_3$ 
in terms of the elements $E_{i,j}$.
By Lemma~\ref{Lsimplify}(2), this is possible if we have a complete understanding of the 
edges in between $E_{i,j}^{-1}$ and $E_{i,j}$ in $S^*$.  We describe this combinatorially in this section.

\begin{lemma} \label{LfactE2}
The loop $\tilde{S}$ is the composition of $2n^2$ paths, 
with each path $e_{i,j}$ and each path $(e_{i,j})^{-1}$ occuring exactly once.
\end{lemma}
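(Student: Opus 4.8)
The plan is to combine Definition~\ref{Dstarlift} with the explicit description of each $\tilde S_\ell$ provided by Lemma~\ref{Lorderloop}, and then perform a bookkeeping argument with residues modulo $n$. By Lemma~\ref{Lorderloop}, each loop $\tilde S_\ell$ is path homotopic to a composition of exactly $2n$ paths, alternating between $n$ paths of the form $e_{i,j}$ and $n$ paths of the form $(e_{i,j})^{-1}$. Since $\tilde S = \tilde S_0 \circ \tilde S_1 \circ \cdots \circ \tilde S_{n-1}$ is the concatenation of the $n$ loops $\tilde S_\ell$, it is a composition of $n \cdot 2n = 2n^2$ paths, each of the form $e_{i,j}^{\pm 1}$. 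This establishes the count of $2n^2$.

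It then remains to show that no repetition occurs. Recalling that $e_{i,j}$ depends only on $i$ and $j$ modulo $n$, there are exactly $n^2$ distinct positively-oriented paths $e_{i,j}$ and $n^2$ distinct negatively-oriented paths $(e_{i,j})^{-1}$, for a total of $2n^2$, matching the count above; hence it suffices to prove that every $e_{i,j}$ and every $(e_{i,j})^{-1}$ occurs at least once. From the formula in Lemma~\ref{Lorderloop}, writing $\tilde S_\ell = \prod_{m=0}^{n-1}\bigl( e_{-\ell+m,\, m} \circ (e_{-\ell+m,\, m+1})^{-1} \bigr)$ with all indices read modulo $n$, a positively-oriented edge $e_{i,j}$ occurs in $\tilde S_\ell$ at step $m$ precisely when $m \equiv j$ and $-\ell + m \equiv i \pmod n$, i.e. exactly when $\ell \equiv j - i$ and $m \equiv j \pmod n$; and a negatively-oriented edge $(e_{i,j})^{-1}$ occurs precisely when $m \equiv j - 1$ and $-\ell + m \equiv i \pmod n$, i.e. exactly when $\ell \equiv j - 1 - i$ and $m \equiv j - 1 \pmod n$. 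In either case there is a unique pair $(\ell, m)$ with $0 \le \ell, m \le n-1$ producing the given signed edge, so each of the $2n^2$ paths $e_{i,j}^{\pm 1}$ occurs exactly once.

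The argument is essentially bookkeeping with residues modulo $n$, so the only potential obstacle is making sure the cyclic index shift $m \mapsto m+1$ in the negatively-oriented factor wraps around correctly: the term with $m = n-1$ contributes $(e_{-\ell+n-1,\, 0})^{-1}$, which is consistent with reading the second index modulo $n$, as already recorded in Lemma~\ref{Lorderloop}. Once this is verified, the assignment sending each of the $2n^2$ slots of the concatenation to the corresponding signed edge in $\{ e_{i,j}^{\pm 1} : 0 \le i, j \le n-1 \}$ is a bijection, and the lemma follows.
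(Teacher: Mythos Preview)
Your argument is correct and is simply a careful unpacking of what the paper records as ``Immediate from Lemma~\ref{Lorderloop}'': you use the explicit formula for $\tilde S_\ell$ to count $2n$ signed edges per loop and then verify bijectivity by solving the congruences for $(\ell,m)$. This is the same approach as the paper, just written out in full.
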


\begin{proof}
Immediate from Lemma \ref{Lorderloop}.
\end{proof}

We begin a combinatorial analysis of the ordering of the elements 
$E_{i,j}$ and $E_{i,j}^{-1}$ in $S^*$, viewed as a 
cycle, rather than a word. 
For $1 \leq j \leq n-1$ and $0 \leq a \leq n-2$, 
let $\overline{j+a}$ be the unique value in $\{1, \ldots, n-1\}$ congruent to $j+a$ modulo $n-1$.

\begin{proposition} \label{Pwrapin}
The ordering of the elements $E_{i,j}$ and $E_{i,j}^{-1}$ in the cycle $S^*$ 
satisfies the following:
\begin{enumerate}
\item The edges between $E_{1,1}^{-1}$ and $E_{1,1}$ are: 
\[f_1:=E_{2,1} \circ (E_{2,2})^{-1} \circ \cdots \circ E_{n-1, n-2} \circ (E_{n-1,n-1})^{-1}.\]
\item The edges between $E_{1,j}^{-1}$ and $E_{1,j}$ are: 
\[f_j:=E_{2, \overline{j}} \circ (E_{2, \overline{j+1}})^{-1} \circ \cdots \circ E_{n-1, \overline{j + n-3}} \circ (E_{n-1, \overline{j + n-2}})^{-1}.\]
\item For $1 \leq i,j \leq n-1$, the edges $E_{i',j'}$ between $E_{i,j}^{-1}$ and $E_{i,j}$ 
with $i' \geq i$ are:\\
$E_{i', j'}$ with $i < i' \leq n-1$ and $i'-j' \equiv i-j+1 \bmod n-1$; and\\ 
$(E_{i',j'})^{-1}$ with $i < i' \leq n-1$ and $i'-j' \equiv i-j \bmod n-1$.
\end{enumerate}
\end{proposition}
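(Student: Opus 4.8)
The plan is to track the cyclic word $S^*$ by unwinding the definitions $\tilde S = \tilde S_0 \circ \cdots \circ \tilde S_{n-1}$ together with the explicit formula for $\tilde S_\ell$ from \cref{Lorderloop}, and then recording, for a fixed occurrence of $E_{i,j}^{-1}$, which letters sit between it and the next occurrence of $E_{i,j}$. First I would write out $S^*$ as a single cyclic sequence of $2n^2$ letters (using \cref{LfactE2} and \cref{LetoE}), organized into $n$ blocks $S^*_0, \ldots, S^*_{n-1}$, where by \cref{Lorderloop} the block $S^*_\ell$ is the length-$2n$ word
\[
S^*_\ell = E_{-\ell,0} \circ (E_{-\ell,1})^{-1} \circ E_{-\ell+1,1} \circ (E_{-\ell+1,2})^{-1} \circ \cdots \circ E_{-\ell+n-1,n-1} \circ (E_{-\ell+n-1,0})^{-1},
\]
with first index read modulo $n$. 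The key observation is that within a block the first index increases by $1$ at each step while the second index follows the pattern $0,1,1,2,2,\ldots$; and the block index $\ell$ governs the "offset" between the two indices. So each positive letter $E_{a,b}$ appearing in $S^*_\ell$ has $a - b \equiv -\ell \bmod n$, and each inverse letter $(E_{a,b})^{-1}$ has $a - b \equiv -\ell - 1 \bmod n$. (Here I must be careful: the indices of $E$ are taken mod $n$, but a letter $E_{a,b}$ is the \emph{trivial} element of $\pi_1(U)$ when $ab \equiv 0$, by the remark after the definition of $E_{i,j}$, so those letters can be deleted, which is exactly what turns the modulus $n$ into the modulus $n-1$ appearing in the statement.)

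Next I would locate, for fixed $(i,j)$ with $1 \le i,j \le n-1$, the two occurrences of $E_{i,j}^{\pm 1}$ among the blocks. The letter $E_{i,j}$ occurs (once, by \cref{LfactE2}) as the positive letter in exactly one block, namely $S^*_\ell$ with $-\ell \equiv i - j \bmod n$; the letter $(E_{i,j})^{-1}$ occurs in the block $S^*_{\ell'}$ with $-\ell' - 1 \equiv i - j \bmod n$, i.e.\ $\ell' = \ell - 1$. So the two occurrences are in consecutive blocks $S^*_{\ell-1}$ (the inverse) and $S^*_\ell$ (the positive letter), and reading the cyclic word forward from $(E_{i,j})^{-1}$ we pass through the tail of $S^*_{\ell-1}$ and then the head of $S^*_\ell$ before reaching $E_{i,j}$. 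Within $S^*_\ell$ the head consists of letters $E_{a,b}, (E_{a,b})^{-1}$ with $a$ running from $-\ell \bmod n$ up to $i$, and within $S^*_{\ell-1}$ the tail consists of letters with $a$ running from $i+1$ up to $-(\ell-1) + n - 1 = -\ell + n$. Collecting the letters with first index $a = i' > i$: the positive ones come from the tail of $S^*_{\ell-1}$, hence satisfy $i' - j' \equiv -( \ell - 1) = -\ell + 1 \equiv i - j + 1 \bmod n$, and the inverse ones come from the head of $S^*_\ell$, hence satisfy $i' - j' \equiv -\ell \equiv i - j \bmod n$ — which is precisely statement (3), once trivial letters ($i'j' \equiv 0$, but here $1 \le i' \le n-1$ forces the congruence to be read mod $n-1$) are accounted for. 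Parts (1) and (2) are the special case $i = 1$, where the positive letter $E_{1,j}$ sits at the very start of its block $S^*_\ell$ with $\ell$ determined by $-\ell \equiv 1-j$, so that \emph{all} of $S^*_\ell$'s remaining letters lie between $(E_{1,j})^{-1}$ and $E_{1,j}$, and reading them off with the substitution $b \mapsto \overline{b}$ (the representative in $\{1,\ldots,n-1\}$) gives exactly the words $f_j$; part (1) is then $j=1$, and $f_1$ is obtained by taking $\ell$ with $-\ell \equiv 0$, i.e.\ $\ell = 0$, whose block after deleting the trivial letters $E_{0,0}$ and the wrap-around term is $E_{2,1} \circ (E_{2,2})^{-1} \circ \cdots \circ E_{n-1,n-2} \circ (E_{n-1,n-1})^{-1}$.

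The main obstacle I expect is bookkeeping the modular arithmetic cleanly: the raw indices of $E_{a,b}$ live mod $n$ (inherited from the $\mu_n \times \mu_n$ action), but the letters with $ab \equiv 0 \bmod n$ are trivial and must be excised, and once excised the surviving indices $1 \le i, j \le n-1$ make the relevant congruence $i' - j' \equiv \cdots$ effectively a statement mod $n-1$, matching the $\overline{\,\cdot\,}$ notation and the hypothesis "$\bmod n-1$" in (3). I would handle this by first proving the block-level congruences mod $n$ for \emph{all} letters (including trivial ones), then invoking the triviality of $E_{a,b}$ with $ab \equiv 0$ to drop those letters, and finally checking that among the retained letters with $1 \le i' \le n-1$ the two congruences mod $n$ collapse to the asserted congruences mod $n-1$ (there is exactly one trivial letter per block, coming from $a \equiv 0$, whose removal shifts the count but not the congruence class of the surviving letters). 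A secondary, more cosmetic obstacle is aligning the "forward in the cyclic word" direction with the left-to-right path-composition convention and confirming that the block containing $(E_{i,j})^{-1}$ immediately precedes the block containing $E_{i,j}$ rather than being several blocks away — this follows from the computation $\ell' = \ell - 1$ above, but I would state it as a separate sub-lemma so the indexing is transparent.
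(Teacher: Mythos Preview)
Your approach is the same as the paper's --- the paper's proof is a single sentence saying that (1) is a special case of (2), which is a special case of (3), which ``follows from \cref{Lorderloop}''. You are supplying the bookkeeping that the paper omits, and your overall plan (write $S^*$ as the concatenation of blocks $S^*_\ell$, read off the block-level congruences $a-b\equiv -\ell$ for positive letters and $a-b\equiv -\ell-1$ for inverses, locate $E_{i,j}$ and $E_{i,j}^{-1}$ in consecutive blocks, then pass from mod $n$ to mod $n-1$ by deleting trivial letters) is sound.

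However, one concrete step in your execution is wrong and would not survive a careful write-up. You assert that among the letters between $E_{i,j}^{-1}$ and $E_{i,j}$ with first index $i'>i$, the positive ones all lie in the tail of $S^*_{\ell-1}$ and the inverse ones all lie in the head of $S^*_\ell$. This dichotomy is false. For instance, take $n=5$ and $(i,j)=(1,3)$: the positive letter $E_{4,1}$ lies in the \emph{head} of $S^*_2$, and the inverse letters $E_{3,1}^{-1}, E_{4,2}^{-1}$ also lie in the head of $S^*_2$, not the tail of $S^*_1$. In general, letters of both signs appear in both the tail of $S^*_{\ell-1}$ and the head of $S^*_\ell$, so you get \emph{four} mod-$n$ congruence classes for $i'-j'$, namely $i-j+1$ and $i-j$ (from the tail) together with $i-j$ and $i-j-1$ (from the head). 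The actual mechanism is that a letter with $i'>i$ lands in the head of $S^*_\ell$ precisely when the first index has wrapped around modulo $n$, i.e.\ when $i'-j'$ equals $(i-j)+n$ or $(i-j-1)+n$ as an integer rather than $i-j$ or $i-j-1$; and $(i-j)+n\equiv i-j+1\pmod{n-1}$ while $(i-j-1)+n\equiv i-j\pmod{n-1}$, which is exactly the shift needed to merge the four mod-$n$ classes into the two mod-$(n-1)$ classes asserted in (3). Your ``main obstacle'' paragraph anticipates that something delicate happens in the passage to mod $n-1$, but the delicacy is not the removal of the single trivial letter per block --- it is this wraparound-induced shift, and your current argument misses it.
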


\begin{proof}
Item (1) is a special case of (2), which is a special case of (3), which follows from 
Lemma \ref{Lorderloop}.
\end{proof}

\section{The homology of the Fermat curve}\label{Sstructure_of_homology} \label{S4}

The homology of the Fermat curve has been studied from many perspectives, see e.g., 
\cite[appendix]{grossrohrappen} and \cite[Section 4]{lim}.
By \cite[Theorem 1, appendix]{grossrohrappen}, $\rH_1(X)$ is a cyclic $\Lambda_1$-module, 
where $\Lambda_1=\ZZ[\mu_n \times \mu_n]$, 
and the annihilator of $\rH_1(X)$ in $\Lambda_1$ can be found in 
\cite[page 210]{grossrohrappen} and \cite[Proposition 4.1]{lim}.
The facts about the structure of $\rH_1(U)$ and $\rH_1(X)$ in this section 
will be familiar to the experts.

In Sections~\ref{S4} and \ref{Smain}, we consider homology with coefficients in $\ZZ$;
however, we follow an approach which is compatible with the results in \cite{Anderson}, 
\cite{WINF:birs}, and \cite{Baction} about the \'etale homology with coefficients in $\ZZ/n\ZZ$ and 
the action of the absolute Galois group upon it; see Section \ref{S5}.

Consider the relative homology $\rH_1(U,Y)$, viewed as a groupoid with one object.
To study $\rH_1(U)$, we use the homomorphism of groupoids from
$\pi_1(U, Y)$ to $\rH_1(U,Y)$, which sends composition to addition; we denote this homomorphism by 
$- \mapsto [-]$. 

\subsection{A basis for the homology of $X$} \label{subsectionwithEijdef}

Let $\Lambda_1 = \ZZ[\mu_n \times \mu_n]$ and let $\epz$ and $\epo$ 
denote the generators of $\mu_n \times \mu_n$.
Let $A_1 = \langle (\epz-1)(\epo-1) \rangle \subset \Lambda_1$ denote the augmentation ideal.

Let $[e_{i,j}]$ (resp.\ $[E_{i,j}]$) denote the class of $e_{i,j}$ (resp.\ $E_{i,j})$ 
in the relative homology $\rH_1(U, Y)$.
Let $\beta$ denote the class of $[e_{0,0}]$ and 
recall that $[e_{i,j}] = \epz^i \epo^j \beta$.
Also
\begin{equation} \label{EEij}
[E_{i,j}] = [e_{0,0}] - [e_{i,0}] - [e_{0,j}] + [e_{i,j}].
\end{equation}

Using modular symbols, Ejder proves in \cite[Theorem 1.2]{E:integral} that a basis for $\rH_1(X)$ 
is given by 
\begin{equation}
\{\epz^i \epo^j(1-\epz)(1-\epo)\beta \mid 1 \leq i \leq n-2, \ 0 \leq j\leq n-2\}.
\end{equation}

In our notation, that means that a basis for $\rH_1(X)$ is given by
\begin{equation} \label{Ehomproj} 
\{\epz^i \epo^j[E_{1,1}] \mid 1 \leq i \leq n-2, \ 0 \leq j\leq n-2\}.
\end{equation}

\subsection{Facts about the homology of the affine curve}\label{subsectionwithH1U}

Next we find a basis for $\rH_1(U)$.

\begin{lemma} \label{LfactE}
The elements $[E_{i,j}]$ from \eqref{EEij} are in $\rH_1(U)$ and
the set $\{[E_{i,j}] \mid 1 \leq i,j \leq n-1\}$ is a basis for $\rH_1(U)$ as a $\Z$-module.
\end{lemma}

\begin{proof}
The first claim is true because $[E_{i,j}]$ is the image of a path in the fundamental groupoid starting and ending at the same point. 

We first show that $\{[E_{i,j}] \mid 1 \leq i,j \leq n-1\}$ is a basis for $\rH_1(U; \ZZ/n\ZZ)$.
There is an isomorphism $\rH_1(U,Y; \ZZ/n\ZZ) \cong \Lambda_1 \otimes (\ZZ/n\ZZ)$, 
taking $\beta \mapsto 1$, \cite[Theorem 6]{Anderson}. 
Thus $\{[e_{i,j}] \mid 0 \leq i,j \leq n-1\}$ is a basis for $\rH_1(U, Y; \ZZ/n\ZZ)$.

Consider the augmentation ideal $A_1 \otimes \ZZ/n\ZZ \subset \Lambda_1 \otimes \ZZ/n\ZZ$.
If $\alpha \in \Lambda_1 \otimes \ZZ/n\ZZ$, write $\alpha = \sum_{i,j} a_{i,j} \epz^i \epo^j$.  
One can check that  $\alpha \in A_1 \otimes \ZZ/n\ZZ$ if and only if the rows and columns of the matrix 
$[a_{i,j}]$ sum to $0$ modulo $n$.
By \cite[Proposition 6.2]{WINF:birs}, $\rH_1(U;\ZZ/n\ZZ) \cong A_1 \otimes \ZZ/n\ZZ$.

The element $[e_{i,j}]$ appears in $[E_{i',j'}]$ 
if and only if $i'=i$ and $j'=j$.
It follows that $\{[E_{i,j}] \mid 1 \leq i,j \leq n-1\}$ is a set of linearly independent elements in $\rH_1(U, Y; \ZZ/n\ZZ)$, 
and thus also in $\rH_1(U; \ZZ/n\ZZ)$.  
Their span contains $n^{((n-1)^2)}$ elements of $\rH_1(U;\ZZ/n\ZZ)$.
Since $\rH_1(U;\ZZ/n\ZZ)$ has rank $(n-1)^2$, this span is the entirety of $\rH_1(U; \ZZ/n\ZZ)$.
This completes the proof that $\{[E_{i,j}] \mid 1 \leq i,j \leq n-1\}$ is a basis for $\rH_1(U;\ZZ/n\ZZ)$.

It follows that $\{[E_{i,j}] \mid 1 \leq i,j \leq n-1\}$ is a set of linearly independent elements in 
$\rH_1(U)$.  Consider the span of the image of this set in $\rH_1(X)$.
This span contains $[E_{1,1}]$ and is a $\Lambda_1$-module, thus contains $\epz^i \epo^j[E_{1,1}]$.
By \eqref{Ehomproj}, the image of this set spans $\rH_1(X)$. 

To complete the proof, we need to show that $\{[E_{i,j}] \mid 1 \leq i,j \leq n-1\}$ spans the 
kernel of $\rH_1(U) \to \rH_1(X)$.  A basis for the kernel is 
$\{\bar{c_j} \mid 0 \leq j \leq n-1\}$.
By Proposition \ref{Pwhichpoint}, 
the loop $\tilde{S}_\ell$ is homotopic to the clockwise loop bounding a disk containing $z_{n - \ell}$.
Thus a basis for the kernel is the set of images of $\tilde{S}_\ell$ in $\rH_1(U)$.
By Lemma \ref{Lorderloop}, $\tilde{S}_\ell$ is homotopic to a loop with a formula written in terms of $e_{i,j}$.
By Lemma \ref{LetoE}, the same is true after replacing $e_{i,j}$ by $E_{i,j}$.  Also $E_{i,j} =0$ if $ij=0$.
Thus the set of images of $\tilde{S}_\ell$ in $\rH_1(U)$ is generated by $\{[E_{i,j}] \mid 1 \leq i, j \leq n-1\}$.
This completes the proof.
\end{proof}

By Lemma \ref{LfactE}, 
there is an injection $\rH_1(U) \wedge \rH_1(U) \to \Lambda_1 \wedge_{\Z} \Lambda_1$
and an isomorphism
\[\rH_1(U) \wedge \rH_1(U) \to A_1 \wedge_{\Z} A_1.\]

\begin{lemma} \label{Lbasiswedge}
Consider the index set
\begin{equation} \label{indexset}
I = \{(i_1,j_1, i_2,j_2) \mid 1 \leq i_1, i_2, j_1,j_2 \leq n-1, \ i_1 \leq i_2, \ {\rm and \ if} \ i_1=i_2 \ {\rm then} \ j_1 < j_2\}.
\end{equation}
Then $\rH_1(U) \wedge \rH_1(U)$ is a free $\ZZ$-module with basis 
$\{[E_{i_1,j_1}] \wedge [E_{i_2,j_2}] \mid (i_1,j_1,i_2,j_2) \in I\}$.
\end{lemma}

\begin{proof}
By Lemma \ref{LfactE}, $\rH_1(U)$ is a free $\ZZ$-module of rank $m:=(n-1)^2$ with basis 
$\{[E_{i_1,j_1}] \mid 1 \leq i_1,j_1 \leq n-1\}$.
Then $\rH_1(U) \wedge \rH_1(U)$ is a free $\ZZ$-module of rank $\binom{m}{2}$.
Because $z \wedge w = - w \wedge z$ and $z \wedge z = 0$, 
a basis is given by the set of simple wedges $[E_{i_1, j_1}] \wedge [E_{i_2,j_2}]$
with $i_1 \leq i_2$ and $(i_1,j_1) \not = (i_2, j_2)$, which is indexed by $I$.
\end{proof}

\subsection{Facts about the homology of the projective curve}

We characterize $\rH_1(X) \wedge \rH_1(X)$ as a quotient of $\rH_1(U) \wedge \rH_1(U)$ both for theoretical reasons
and for the computational applications in Sections~\ref{Sverify}-\ref{Sapplication}.

\begin{lemma} \label{Lwedge}
Let $S$ be the kernel of $\rH_1(U) \to \rH_1(X)$.
Then the kernel of $\rH_1(U) \wedge \rH_1(U) \to \rH_1(X) \wedge \rH_1(X)$
equals $S \wedge \rH_1(U)$.
\end{lemma}

\begin{proof}
Since $\rH_1(X)$ is a free module, the quotient map $\rH_1(U) \to \rH_1(X)$ splits, giving a direct sum decomposition $\rH_1(U) \cong \rH_1(X) \oplus S$, where $S$, $\rH_1(X)$ and $\rH_1(U)$ are all free modules. The wedge of the direct sum decomposes as $\rH_1(U) \wedge \rH_1(U)  \cong (\rH_1(X) \wedge \rH_1(X)) \oplus 
(\rH_1(X) \wedge S) \oplus (S \wedge S)$, showing the claim.
\end{proof}

We need an explicit description of $\rH_1(X)$ as a quotient of $\rH_1(U)$
for the computational applications in Sections~\ref{Sverify}-\ref{Sapplication}.
Define $\gamma_i \in \Lambda_1$ by the formula
\begin{equation} \label{Egamma}
\gamma_i = \epz^{-i} (1 - \epo)(1 + \epz \epo + \cdots + \epz^{n-1} \epo^{n-1}).
\end{equation}

\begin{lemma} \label{Lrelationgamma} 
The set $\{\gamma_i \beta\mid 1 \leq i \leq n-1\}$ is a basis for $S={\rm Ker}(\rH_1(U) \to \rH_1(X))$.
\end{lemma}

\begin{proof}
By Proposition \ref{Pwhichpoint}, 
a basis for the kernel of $\rH_1(U) \to \rH_1(X)$ is the set of images of $\tilde{S}_\ell$ in $\rH_1(U)$.
Using Lemma \ref{Lorderloop}, one can check that $\gamma_\ell \beta$ is 
the image of $\tilde{S}_\ell$ under the map $\pi_1(U) \to \rH_1(U)$.
Thus $\{\gamma_i \beta\mid 1 \leq i \leq n-1\}$ is a basis 
for the kernel of $\rH_1(U) \to \rH_1(X)$.
\end{proof}

\begin{remark} \label{RSandgamma}
Since $\gamma_\ell \beta$ is 
the image of $\tilde{S}_\ell$, one can see that
the image of $\tilde{S}$ is 
$\sum_{\ell=0}^{n-1} \gamma_\ell \beta=0$.  Thus $\tilde{S} \in [\pi_1(U)]_2$.
 \end{remark}

\subsection{Properties of the classifying element $\rho$} \label{Sproperties}

The classifying element $\rho \in \rH_1(X) \wedge \rH_1(X)$ satisfies the following invariance property 
for the geometric action of automorphisms in ${\rm Aut}(X)$.  See Proposition \ref{Pabstractalg}
for an invariance property for the arithmetic action of automorphisms in the absolute Galois group $G_\QQ$.

\begin{proposition} \label{Pabstractinv}
Let $\rho$ be a generator for the image of $\rH_2(X) \to \rH_1(X) \wedge \rH_1(X)$.
If $\phi \in {\rm Aut}(X)$, then $\phi(\rho)=\rho$.
\end{proposition}

\begin{proof}
Every algebraic automorphism $\phi$ of $X$ is orientation-preserving and 
preserves the fundamental class in $\rH_2(X)$. It follows that $\phi$ preserves the fundamental class in $\rH_2(X)$.
\end{proof}

By \cite{Tzermias95}, or \cite{Leopoldt}, 
if $n \geq 4$, then $|{\rm Aut}(X)|=6n^2$.
We will apply Proposition~\ref{Pabstractinv} to:
\begin{enumerate}
\item the automorphisms $\phi_0([x:y:z])=[\zeta x: y: z]$ and $\phi_1([x:y:z])=[x: \zeta y: z]$
which act on $\rH_1(U,Y)$ via multiplication by $\epz$ and $\epo$ respectively.
So $\rho$ is invariant under the action of $\Lambda_1$.
\item the transposition $\tau([x:y:z])=[y:x:z]$; after using $\beta$ to fix an isomorphism between $\Lambda_1$ and $\rH_1(U,Y)$, then $\beta$ acts on $\rH_1(U,Y)$ by 
the ring automorphism of $\Lambda_1$ that switches $\epz$ and $\epo$.
So $\rho$ is symmetric.
\item the $3$-cycle $\omega([x:y:z]) = [z:-x:y]$; this automorphism does not stabilize $U$ and $Y$.
\end{enumerate}

\section{Main result} \label{Smain}

In this section, we complete the analysis of the structure of ${\rm gr}(\pi)$ as a graded Lie algebra, by 
finding a formula for the element $\Delta \in \rH_1(U) \wedge \rH_1(U)$ that maps to $\rho \in 
\rH_1(X) \wedge \rH_1(X)$.  Then we give some examples for $n=3,4,5$.

\subsection{Proof of the main result}
By Lemma \ref{Lbasiswedge}, with $I$ defined as in \eqref{indexset}, 
$\rH_1(U) \wedge \rH_1(U)$ is a free $\ZZ$-module with basis $\{[E_{i_1,j_1}] \wedge [E_{i_2,j_2}] \mid (i_1,j_1,i_2,j_2) \in I\}$. Thus there exist $\epsilon(i_1,j_1,i_2,j_2) \in \ZZ$, such that
$\Delta \in \rH_1(U) \wedge \rH_1(U)$ can be uniquely represented as the linear combination 
\[\Delta = \sum_{(i_1,j_1,i_2,j_2) \in I} \epsilon(i_1,j_1,i_2,j_2) [E_{i_1,j_1}] \wedge [E_{i_2,j_2}].\]

Theorem \ref{Tintro} follows immediately from the next result.

\begin{theorem} \label{Tmainresulthere}
In $\rH_1(U) \wedge \rH_1(U)$, 
the coefficient $\epsilon(i_1,j_1,i_2,j_2)$ of the basis element $[E_{i_1,j_1}] \wedge [E_{i_2, j_2}]$ in $\Delta$ is 
\[\epsilon(i_1,j_1,i_2,j_2) =\begin{cases} 
      1 & \text{if } j_2-j_1 \equiv i_2-i_1 \not \equiv 0 \bmod n-1,  \\
     -1 & \text{if } j_2-j_1 +1 \equiv i_2-i_1 \not \equiv 0 \bmod n-1, \\
      0 & \text{otherwise}.
   \end{cases}
\]
\end{theorem}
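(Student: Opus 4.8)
The plan is to identify $\Delta$ with the image in $\rH_1(U)\wedge\rH_1(U)\cong[\pi_1(U)]_2/[\pi_1(U)]_3$ of the explicit cyclic word $S^*$, and then to extract each coefficient $\epsilon(i_1,j_1,i_2,j_2)$ as a signed crossing number of two chords, read off from the combinatorial description of $S^*$ in Proposition~\ref{Pwrapin}.

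\emph{Reduction.} By Proposition~\ref{PS=C} and Lemma~\ref{Lsimplify}(1), $\tilde S$ lies in $[\pi_1(U)]_2$ and has the same image as $T$ in $[\pi_1(U)]_2/[\pi_1(U)]_3$; by the definition of the isomorphism $C$ (equivalently, Lemma~\ref{Lleeway}) that image is $\Delta$. Since $\tilde S$ is homotopic to $S^*$ by Lemma~\ref{LetoE}, the class of $S^*$ in $\rH_1(U)\wedge\rH_1(U)$ equals $\Delta$. Moreover, by Lemma~\ref{LfactE2} and the definition of $S^*$, once the trivial letters $E_{i,j}$ with $ij=0$ are deleted, the cyclic word $S^*$ contains each of $E_{i,j}$ and $(E_{i,j})^{-1}$, for $1\le i,j\le n-1$, exactly once.

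\emph{The combinatorial pairing.} Let $\mathcal W$ be the free group on symbols $x_{i,j}$ for $1\le i,j\le n-1$, and let $\psi\colon\mathcal W\to\pi_1(U)$ send $x_{i,j}\mapsto E_{i,j}$; substituting $x_{i,j}$ for $E_{i,j}$ in the nontrivial letters of $S^*$ produces an element $\hat S\in[\mathcal W,\mathcal W]$ with $\psi(\hat S)=S^*$. Because $\bar x_{i,j}\mapsto\bar E_{i,j}$ is an isomorphism $\mathcal W^{\mathrm{ab}}\xrightarrow{\sim}\rH_1(U)$ by Lemma~\ref{LfactE}, the induced map $\wedge^2\mathcal W^{\mathrm{ab}}=[\mathcal W]_2/[\mathcal W]_3\to[\pi_1(U)]_2/[\pi_1(U)]_3=\rH_1(U)\wedge\rH_1(U)$ is an isomorphism, so it suffices to compute the class of $\hat S$. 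Writing $\hat S=\prod_k x_{(i_k,j_k)}^{\eta_k}$ with $\eta_k=\pm1$, the quadratic part of the Magnus expansion $x\mapsto 1+X_x$ gives, for any such word in which each generator and its inverse occurs exactly once,
\[
\bigl(\text{coefficient of }\bar x_{(a,b)}\wedge\bar x_{(c,d)}\bigr)\;=\;\sum_{\substack{k<l\\ (i_k,j_k)=(a,b),\ (i_l,j_l)=(c,d)}}\eta_k\eta_l,
\]
and the same formula can be derived directly from Lemma~\ref{Lsimplify}. A short case analysis over the possible relative orders of the occurrences of $x_{(a,b)}$ and $x_{(c,d)}$ shows that this double sum vanishes unless the two chords joining these occurrences on the circle carrying the cyclic word cross, and that when they cross it equals $+1$ or $-1$ according as the cyclic arrangement of the four letters is that of the commutator $[x_{(a,b)},x_{(c,d)}]$ or of $[x_{(a,b)},x_{(c,d)}^{-1}]$ (up to rotation and the evident sign symmetry). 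Hence, fixing the lexicographic order on pairs, $\epsilon(i_1,j_1,i_2,j_2)$ is exactly this signed crossing number for the chords of $E_{i_1,j_1}$ and $E_{i_2,j_2}$ in $S^*$. The sign bookkeeping here, together with pinning down the orientation convention implicit in ``between $E_{i,j}^{-1}$ and $E_{i,j}$'' in Proposition~\ref{Pwrapin} (namely, the reading--direction arc), is the part of the argument that demands the most care.

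\emph{Evaluation.} Suppose $(i_1,j_1)$ precedes $(i_2,j_2)$ lexicographically with $i_1<i_2$. Applying Proposition~\ref{Pwrapin}(3) with $i=i_1,\ j=j_1,\ i'=i_2,\ j'=j_2$, the reading--direction arc of $S^*$ from $E_{i_1,j_1}^{-1}$ to $E_{i_1,j_1}$ contains $(E_{i_2,j_2})^{-1}$ precisely when $j_2-j_1\equiv i_2-i_1\pmod{n-1}$, contains $E_{i_2,j_2}$ precisely when $j_2-j_1+1\equiv i_2-i_1\pmod{n-1}$, and cannot contain both. If it contains exactly one of them the chords cross, and the cyclic order of the four letters beginning at $E_{i_1,j_1}^{-1}$ is $E_{i_1,j_1}^{-1},(E_{i_2,j_2})^{-1},E_{i_1,j_1},E_{i_2,j_2}$ in the first case---the pattern of $[E_{i_1,j_1},E_{i_2,j_2}]$, so the coefficient is $+1$---and $E_{i_1,j_1}^{-1},E_{i_2,j_2},E_{i_1,j_1},(E_{i_2,j_2})^{-1}$ in the second---the pattern of $[E_{i_1,j_1},E_{i_2,j_2}^{-1}]$, so the coefficient is $-1$. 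If it contains neither, both occurrences of $E_{i_2,j_2}$ lie in the complementary arc, the chords do not cross, and the coefficient is $0$; here $i_2-i_1\not\equiv0\pmod{n-1}$ automatically. If instead $i_1=i_2$ (so $j_1\ne j_2$), then reading Proposition~\ref{Pwrapin}(3) as the \emph{complete} list of edges with $i'\ge i_1$ lying between $E_{i_1,j_1}^{-1}$ and $E_{i_1,j_1}$ shows that no occurrence of $E_{i_1,j_2}^{\pm1}$ lies strictly between them, so the four endpoints do not alternate, the chords do not cross, and the coefficient is $0$; this agrees with the stated formula, both of whose case conditions fail when $i_2-i_1\equiv0$ while $0<j_2-j_1<n-1$. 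This covers every element of the index set $I$ and produces the asserted value of $\epsilon(i_1,j_1,i_2,j_2)$.
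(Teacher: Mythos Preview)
Your proof is correct, and the reduction step (identifying $\Delta$ with the class of $S^*$ via Proposition~\ref{PS=C}, Lemmas~\ref{Lsimplify}(1), \ref{LetoE}, \ref{Lleeway}, \ref{LfactE2}) matches the paper exactly. The extraction of coefficients, however, is genuinely different. The paper proceeds inductively: using Proposition~\ref{Pwrapin}(2) it writes the cycle $S^*$ as $\prod_{j}(E_{1,j}^{-1}f_jE_{1,j})$, applies Lemma~\ref{Lsimplify}(2) to peel off the terms $E_{1,j}\wedge(-f_j)$ (which give all coefficients with $i_1=1$), observes that the residual product $\tilde f=f_1\cdots f_{n-1}$ is again a cycle with the same structure but missing the $i=1$ letters, and inducts on $i_1$. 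You instead invoke the chord-diagram/Magnus interpretation once and for all: in a balanced commutator word the coefficient of $\bar a\wedge\bar b$ is the signed crossing number of the two chords, so a single direct appeal to Proposition~\ref{Pwrapin}(3) decides every pair simultaneously, with no induction needed.

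Both arguments rest on the same combinatorial input (Proposition~\ref{Pwrapin}) and give the same answer; yours is more conceptual and handles all $i_1$ at once, while the paper's is more elementary in that it uses nothing beyond the two parts of Lemma~\ref{Lsimplify}. One small remark: your displayed formula for the coefficient is literally the coefficient of $X_{(a,b)}X_{(c,d)}$ in the Magnus expansion rather than its antisymmetrization, which is the right thing, but this is easy to misread; it might help to say explicitly that this equals the coefficient of $\bar x_{(a,b)}\wedge\bar x_{(c,d)}$ because the word is balanced (each generator appearing once with each sign), and then the three-case check of relative orders---nested, separated, interleaved---is exactly what you summarize as the ``short case analysis.'' The sign bookkeeping in your Evaluation paragraph is correct: the pattern $a^{-1},b^{-1},a,b$ is a rotation of $[a,b]$ and contributes $+1$, while $a^{-1},b,a,b^{-1}$ is a rotation of $[a,b^{-1}]$ and contributes $-1$.
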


\begin{proof}
Recall that $T=\prod_{i=1}^g [a_i, z_i]=(c_0 \circ c_1 \circ \cdots c_{n-1})^{-1}$.
By Lemma \ref{Lleeway}, if
$r_1, \ldots, r_N, s_1 \ldots, s_N \in \pi_1(U)$ are such that 
$T= [r_1, s_1] \circ \cdots \circ [r_N, s_N]$,
then $\Delta = \sum_{i=1}^N \bar{r}_i \wedge \bar{s}_i$ in $\rH_1(U) \wedge \rH_1(U)$.
By Proposition \ref{PS=C},
$\tilde{S}$ is homotopic to $(c_1 \circ \cdots \circ c_{n-1} \circ c_0)^{-1}$.
The difference between $(c_0 \circ c_1 \circ \cdots c_{n-1})^{-1}$ 
and $(c_1 \circ \cdots \circ c_{n-1} \circ c_{0})^{-1}$ is not significant by
Lemma~\ref{Lsimplify}(1); thus $\tilde{S}$ and $T$ have the same image in 
$\rH_1(U) \wedge \rH_1(U)$.

By Lemma~\ref{LetoE}, $S^*$ is homotopic to $\tilde{S}$.
It thus suffices to express $S^*$ as a product of commutators.
By Lemma~\ref{Lsimplify}(2), the image of $S^*$ in $[\pi_1(U)]_2/[\pi_1(U)]_3$ 
depends only on the ordering of the edges in between $E_{i,j}^{-1}$ and $E_{i,j}$ in $S^*$. 
We may view $S^*$ as a cycle rather than a word, meaning that the last element precedes the first one.
By Proposition \ref{Pwrapin}, 
the ordering of the elements $E_{i,j}$ and $E_{i,j}^{-1}$ in $S^*$ is:
\[(E_{1,1}^{-1} \circ f_1 \circ E_{1,1}) \circ (E_{1,2}^{-1} \circ f_2 \circ E_{1,2}) \circ \cdots 
\circ (E_{1, n-1}^{-1} \circ f_{n-1} \circ E_{1,n-1}),\]
where $f_j$ is defined in Proposition \ref{Pwrapin}.

By Lemma \ref{Lsimplify}(2), $\Delta$ is the sum of
\begin{equation} \label{EforE1}
[E_{1,1}] \wedge (-f_1) + \cdots + [E_{1,n-1}] \wedge (-f_{n-1}),
\end{equation}
and the image of $\tilde{f} := f_1 \circ \cdots \circ f_{n-1}$ in $\rH_1(U) \wedge \rH_1(U)$.

Note that $E_{1,j}$ and $E_{1,j}^{-1}$ do not appear in $\tilde{f}$ for any $j$.
Thus the coefficient of $[E_{1,j}] \wedge [E_{i_2,j_2}]$ is zero unless $E_{i_2,j_2}$ or its inverse appears in $f_j$.
In particular, it is zero if $i_2 = 1$.
For $i_2 \not = 1$, by the definition of $f_j$, the coefficient of $[E_{1,j}] \wedge [E_{i_2,j_2}]$ is $+1$ if $j_2-i_2= j-1$ and is $-1$ if 
$j_2-i_2=j-2$.  This is equivalent to the coefficient being $+1$ if $j_2-j_1 \equiv i_2 - 1 \not \equiv 0 \bmod n-1$ 
and being $-1$ if $j_2 -j_1 +1 \equiv i_2 - i_1 \not \equiv 0 \bmod n-1$, which is the claimed statement for $i_1 = 1$.

Furthermore, the ordering of the edges in the cycle $\tilde{f}$ is the same as for 
the cycle $S^*$, 
except the edges $e_{1,j}$ and $e_{1,j}^{-1}$ do not appear.
Using Proposition \ref{Pwrapin}(3) and repeating the argument shows that the statement is 
true for $i=2$.  The result follows by induction. 
\end{proof}

\begin{remark} \label{Rcheck}
For $p=5$, we were able to independently verify using Magma that the image of $\Delta$ generates 
$\langle \rho \rangle$ in $\rH_1(X) \wedge \rH_1(X)$; see Section~\ref{Sverify}. 
This uses the invariance properties from Propositions \ref{Pabstractinv} and \ref{Pabstractalg} and
the explicit formulas for the Galois action from \cite[Theorem 1.1]{Baction}, \cite[Theorem~1.1]{WINF:birs}.  
\end{remark}

\begin{remark}
The combinatorial description of $\Delta \in \rH_1(U) \wedge \rH_1(U)$ can be related with the ring of cliques as follows.
Consider the graph whose vertices are indexed by the $(n-1)^2$ elements $[E_{i,j}]$ of the 
basis of $\rH_1(U)$.
Place these vertices on $n-1$ levels indexed by the value of $j-i \bmod n-1 \in \{0, \ldots, n-2\}$.
Elements $[E_{i_1,j_1}] \wedge [E_{i_2,j_2}]$ of $\rH_1(U) \wedge \rH_1(U)$ can be indexed by  
a subset of edges in this graph.
The elements in $\Delta$ 
yield the complete graph $K_{n-1}$ on each level; also 
each vertex on level $i$ is connected to $n-2$ vertices from levels $i-1 \bmod n-1$ and $i+1 \bmod n-1$.
\end{remark}

\subsection{Examples}

In Sections~\ref{Sex3}-\ref{Sex5}, we illustrate the process of finding $\Delta$ when $n=3,4,5$;
of course, the results match the formula for $\Delta$ found in Theorem \ref{Tintro}.

\subsubsection{The case $n=3$} \label{Sex3}

Let $\zeta=e^{2 \pi I/3}$.  By Lemma \ref{Lorderloop}:
\begin{eqnarray*}
\tilde{S}_0 & = &  (0,1) \mapsto (1,0) \mapsto (0,\zeta) \mapsto (\zeta,0) \mapsto (0, \zeta^2) \mapsto (\zeta^2, 0) \mapsto (0,1)\\
& = & e_{0,0} \circ e_{0,1}^{-1} \circ  e_{1,1} \circ e_{1,2}^{-1} \circ  e_{2,2} \circ e_{2,0}^{-1};
\end{eqnarray*}
\begin{eqnarray*}
\tilde{S}_1 & = &  (0,1) \mapsto (\zeta^2,0) \mapsto (0,\zeta) \mapsto (1,0) \mapsto (0, \zeta^2) \mapsto (\zeta, 0) \mapsto (0,1)\\
& = & e_{2,0} \circ e_{2,1}^{-1} \circ  e_{0,1} \circ e_{0,2}^{-1} \circ  e_{1,2} \circ e_{1,0}^{-1}; \ {\rm and}
\end{eqnarray*}
\begin{eqnarray*}
\tilde{S}_2 & = &  (0,1) \mapsto (\zeta,0) \mapsto (0,\zeta) \mapsto (\zeta^2,0) \mapsto (0, \zeta^2) \mapsto (1, 0) \mapsto (0,1)\\
& = & e_{1,0} \circ e_{1,1}^{-1} \circ  e_{2,1} \circ e_{2,2}^{-1} \circ  e_{0,2} \circ e_{0,0}^{-1}.
\end{eqnarray*}

By Lemma \ref{LetoE}, 
\begin{eqnarray*}
S^* & = & E_{0,0} \circ E_{0,1}^{-1} \circ  E_{1,1} \circ E_{1,2}^{-1} \circ  E_{2,2} \circ E_{2,0}^{-1} \circ  \\
& & E_{2,0} \circ E_{2,1}^{-1} \circ  E_{0,1} \circ E_{0,2}^{-1} \circ  E_{1,2} \circ E_{1,0}^{-1} \circ \\ 
& & E_{1,0} \circ E_{1,1}^{-1} \circ  E_{2,1} \circ E_{2,2}^{-1} \circ  E_{0,2} \circ E_{0,0}^{-1} \\ 
& = & 
E_{1,1} \circ E_{1,2}^{-1} \circ  E_{2,2}  \circ  
E_{2,1}^{-1} \circ E_{1,2} \circ   
E_{1,1}^{-1} \circ  E_{2,1} \circ E_{2,2}^{-1}. 
\end{eqnarray*}

By Lemma \ref{Lsimplify}(1), in $[\pi_1(U)]_2/[\pi_1(U)]_3$, the image of $S^*$ is the same as the image of
\[(E_{1,1}^{-1} \circ  E_{2,1} \circ E_{2,2}^{-1} \circ E_{1,1}) \circ (E_{1,2}^{-1} \circ  E_{2,2} \circ E_{2,1}^{-1} \circ E_{1,2}).\]

By Lemma \ref{Lsimplify}(2), 
\begin{eqnarray*}
\Delta & = & E_{1,1} \wedge (E_{2,2} - E_{2,1}) + E_{1,2} \wedge (E_{2,1} - E_{2,2})\\
& = &  E_{1,1} \wedge E_{2,2} - E_{1,1} \wedge E_{2,1} +  E_{1,2} \wedge E_{2,1} - E_{1,2} \wedge E_{2,2}.
\end{eqnarray*}


\subsubsection{The case $n=4$}

Let $\zeta=e^{2\pi I/4}$.  By Lemma \ref{Lorderloop}:
\begin{eqnarray*}
\tilde{S}_0 & = &  (0,1) \mapsto (1,0) \mapsto (0,\zeta) \mapsto (\zeta,0) \mapsto (0, \zeta^2) \mapsto (\zeta^2, 0) \mapsto 
(0, \zeta^3) \mapsto (\zeta^3, 0) \mapsto (0,1)\\
& = & e_{0,0} \circ e_{0,1}^{-1} \circ  e_{1,1} \circ e_{1,2}^{-1} \circ  e_{2,2} \circ e_{2,3}^{-1} \circ e_{3,3} 
\circ e_{3,0}^{-1}; 
\end{eqnarray*}
\begin{eqnarray*}
\tilde{S}_1 & = &  (0,1) \mapsto (\zeta^3,0) \mapsto (0, \zeta) \mapsto (1,0) \mapsto (0, \zeta^2) \mapsto (\zeta,0) \mapsto (0, \zeta^3) \mapsto (\zeta^2,0) \mapsto (0,1) \\
& = & e_{3,0} \circ e_{3,1}^{-1} \circ  e_{0,1} \circ e_{0,2}^{-1} \circ  e_{1,2} \circ e_{1,3}^{-1} \circ e_{2,3} 
\circ e_{2,0}^{-1};
\end{eqnarray*}
\begin{eqnarray*}
\tilde{S}_2 & = &  (0,1) \mapsto (\zeta^2,0) \mapsto (0, \zeta) \mapsto (\zeta^3,0) \mapsto (0, \zeta^2) \mapsto (1,0) \mapsto (0, \zeta^3) \mapsto (\zeta,0) \mapsto (0,1) \\
& = & e_{2,0} \circ e_{2,1}^{-1} \circ  e_{3,1} \circ e_{3,2}^{-1} \circ  e_{0,2} \circ e_{0,3}^{-1} \circ e_{1,3} 
\circ e_{1,0}^{-1}; \ {\rm and}
\end{eqnarray*}
\begin{eqnarray*}
\tilde{S}_3 & = &  (0,1) \mapsto (\zeta,0)  \mapsto (0, \zeta) \mapsto (\zeta^2,0) \mapsto (0, \zeta^2) \mapsto (\zeta^3,0) \mapsto (0, \zeta^3) \mapsto (1,0) \mapsto (0,1) \\
& = & e_{1,0} \circ e_{1,1}^{-1} \circ  e_{2,1} \circ e_{2,2}^{-1} \circ  e_{3,2} \circ e_{3,3}^{-1} \circ e_{0,3} 
 \circ e_{0,0}^{-1}.
\end{eqnarray*}

By Lemma \ref{LetoE}, 
\begin{eqnarray*}
S^* & = & 
 E_{1,1} \circ E_{1,2}^{-1} \circ  E_{2,2} \circ E_{2,3}^{-1} \circ E_{3,3} \circ  
  E_{3,1}^{-1}  \circ  E_{1,2} \circ E_{1,3}^{-1} \circ E_{2,3} \circ \\
&&  E_{2,1}^{-1} \circ  E_{3,1} \circ E_{3,2}^{-1}  \circ E_{1,3} \circ 
  E_{1,1}^{-1} \circ  E_{2,1} \circ E_{2,2}^{-1} \circ  E_{3,2} \circ E_{3,3}^{-1}.  
\end{eqnarray*}  

By Lemma \ref{Lsimplify}(1), in $[\pi_1(U)]_2/[\pi_1(U)]_3$, the image of $S^*$ is the same as the image of
  \begin{eqnarray*}
  &&(E_{1,1}^{-1} \circ  E_{2,1} \circ E_{2,2}^{-1} \circ  E_{3,2} \circ E_{3,3}^{-1}  \circ E_{1,1}) \circ \\
  && (E_{1,2}^{-1} \circ  E_{2,2} \circ E_{2,3}^{-1} \circ E_{3,3}  \circ E_{3,1}^{-1}  \circ  E_{1,2}) \circ \\
  && (E_{1,3}^{-1} \circ E_{2,3} \circ E_{2,1}^{-1} \circ  E_{3,1} \circ E_{3,2}^{-1}  \circ E_{1,3}).
 \end{eqnarray*}

By Lemma \ref{Lsimplify}(2), 
\begin{eqnarray*}
\Delta & = &  E_{1,1} \wedge (E_{2,2} - E_{2,1} +E_{3,3} - E_{3,2} ) +\\
&& E_{1,2} \wedge ( E_{2,3} -E_{2,2}+E_{3,1} -E_{3,3} ) + \\
&& E_{1,3} \wedge ( E_{2,1} -E_{2,3} +E_{3,2} -E_{3,1} )+ \\
&& E_{2,1} \wedge (E_{3,2} -E_{3,1}) + \\
&& E_{2,2} \wedge (E_{3,3} -E_{3,2}) + \\
&& E_{2,3} \wedge (E_{3,1} -E_{3,3}).
\end{eqnarray*}

\subsubsection{The case $n=5$} \label{Sex5}

Let $\zeta=e^{2 \pi I/5}$.  By Lemma \ref{Lorderloop}:
\begin{eqnarray*}
\tilde{S}_0 & = &  (0,1) \mapsto (1,0) \mapsto (0,\zeta) \mapsto (\zeta,0) \mapsto (0, \zeta^2) \mapsto (\zeta^2, 0) \mapsto 
(0, \zeta^3) \mapsto (\zeta^3, 0) \mapsto (0, \zeta^4) \mapsto (\zeta^4, 0) \mapsto (0,1)\\
& = & e_{0,0} \circ e_{0,1}^{-1} \circ  e_{1,1} \circ e_{1,2}^{-1} \circ  e_{2,2} \circ e_{2,3}^{-1} \circ e_{3,3} 
\circ e_{3,4}^{-1} \circ e_{4,4} \circ e_{4,0}^{-1};
\end{eqnarray*}
\begin{eqnarray*}
\tilde{S}_1 & = &  (0,1) \mapsto (\zeta^4,0) \mapsto (0,\zeta) \mapsto (1,0) \mapsto (0, \zeta^2) \mapsto (\zeta, 0) \mapsto 
(0, \zeta^3) \mapsto (\zeta^2, 0) \mapsto (0, \zeta^4) \mapsto (\zeta^3, 0) \mapsto (0,1)\\
& = & e_{4,0} \circ e_{4,1}^{-1} \circ  e_{0,1} \circ e_{0,2}^{-1} \circ  e_{1,2} \circ e_{1,3}^{-1} \circ e_{2,3} 
\circ e_{2,4}^{-1} \circ e_{3,4} \circ e_{3,0}^{-1};
\end{eqnarray*}
\begin{eqnarray*}
\tilde{S}_2 & = &  (0,1) \mapsto (\zeta^3,0) \mapsto (0,\zeta) \mapsto (\zeta^4,0) \mapsto (0, \zeta^2) \mapsto (1, 0) \mapsto 
(0, \zeta^3) \mapsto (\zeta, 0) \mapsto (0, \zeta^4) \mapsto (\zeta^2, 0) \mapsto (0,1)\\
& = & e_{3,0} \circ e_{3,1}^{-1} \circ  e_{4,1} \circ e_{4,2}^{-1} \circ  e_{0,2} \circ e_{0,3}^{-1} \circ e_{1,3} 
\circ e_{1,4}^{-1} \circ e_{2,4} \circ e_{2,0}^{-1};
\end{eqnarray*}
\begin{eqnarray*}
\tilde{S}_3 & = &  (0,1) \mapsto (\zeta^2,0) \mapsto (0,\zeta) \mapsto (\zeta^3,0) \mapsto (0, \zeta^2) \mapsto (\zeta^4, 0) \mapsto 
(0, \zeta^3) \mapsto (1, 0) \mapsto (0, \zeta^4) \mapsto (\zeta, 0) \mapsto (0,1)\\
& = & e_{2,0} \circ e_{2,1}^{-1} \circ  e_{3,1} \circ e_{3,2}^{-1} \circ  e_{4,2} \circ e_{4,3}^{-1} \circ e_{0,3} 
\circ e_{0,4}^{-1} \circ e_{1,4} \circ e_{1,0}^{-1}; \ {\rm and}
\end{eqnarray*}
\begin{eqnarray*}
\tilde{S}_4 & = &  (0,1) \mapsto (\zeta,0) \mapsto (0,\zeta) \mapsto (\zeta^2,0) \mapsto (0, \zeta^2) \mapsto (\zeta^3, 0) \mapsto 
(0, \zeta^3) \mapsto (\zeta^4, 0) \mapsto (0, \zeta^4) \mapsto (1, 0) \mapsto (0,1)\\
& = & e_{1,0} \circ e_{1,1}^{-1} \circ  e_{2,1} \circ e_{2,2}^{-1} \circ  e_{3,2} \circ e_{3,3}^{-1} \circ e_{4,3} 
\circ e_{4,4}^{-1} \circ e_{0,4} \circ e_{0,0}^{-1}.
\end{eqnarray*}

By Lemma \ref{LetoE}, 
\begin{eqnarray*}
S^* & = & 
E_{1,1} \circ E_{1,2}^{-1} \circ  E_{2,2} \circ E_{2,3}^{-1} \circ E_{3,3} 
\circ E_{3,4}^{-1} \circ E_{4,4} \circ \\ 
&& E_{4,1}^{-1} \circ  E_{1,2} \circ E_{1,3}^{-1} \circ E_{2,3} 
\circ E_{2,4}^{-1} \circ E_{3,4} \circ  \\ 
&& E_{3,1}^{-1} \circ  E_{4,1} \circ E_{4,2}^{-1} \circ  E_{1,3} 
\circ E_{1,4}^{-1} \circ E_{2,4} \circ  \\ 
&& E_{2,1}^{-1} \circ  E_{3,1} \circ E_{3,2}^{-1} \circ  E_{4,2} \circ E_{4,3}^{-1} \circ 
E_{1,4} \circ \\ 
&& E_{1,1}^{-1} \circ  E_{2,1} \circ E_{2,2}^{-1} \circ  E_{3,2} \circ E_{3,3}^{-1} \circ E_{4,3} 
\circ E_{4,4}^{-1}.
\end{eqnarray*}

By Lemma \ref{Lsimplify}(1)-(2), in $[\pi_1(U)]_2/[\pi_1(U)]_3$, the image of $S^*$ is 
\begin{eqnarray*}
\Delta & = & E_{1,1} \wedge  (-E_{2,1} + E_{2,2} -  E_{3,2} + E_{3,3} - E_{4,3} + E_{4,4})\\
& & +E_{1,2} \wedge (-E_{2,2} + E_{2,3} - E_{3,3} + E_{3,4} - E_{4,4} + E_{4,1})\\
& & +E_{1,3} \wedge(-E_{2,3} + E_{2,4} - E_{3,4} + E_{3,1} -  E_{4,1} + E_{4,2})\\
&& +E_{1,4} \wedge  (-E_{2,4} + E_{2,1} -  E_{3,1} + E_{3,2} -  E_{4,2} + E_{4,3})\\
&& +E_{2,1} \wedge (-E_{3,1} +E_{3,2}-E_{4,2} +E_{4,3})\\
&& +E_{2,2} \wedge  (-E_{3,2} + E_{3,3} - E_{4,3} + E_{4,4})\\
&& +E_{2,3} \wedge (-E_{3,3} + E_{3,4} -  E_{4,4} + E_{4,1})\\
&& +E_{2,4} \wedge (-E_{3,4} + E_{3,1} - E_{4,1} + E_{4,2})\\
&& +E_{3,1} \wedge (-E_{4,1} +E_{4,2} )\\
&& +E_{3,2} \wedge (-E_{4,2} +E_{4,3} )\\
&& +E_{3,3} \wedge (-E_{4,3} + E_{4,4})\\
&& +E_{3,4} \wedge (E_{4,1}-E_{4,4}).
\end{eqnarray*}

\section{The \'etale homology and action of the absolute Galois group} \label{S5}

Let $K=\QQ(\zeta_n)$.  We consider $X$ and $U$ as curves over $K$.
Let $Y \subset U$ be the set of $2n$ points where $xy=0$.
In this section, we denote the \'etale fundamental group by $\pi_1(U)$, 
the \'etale homology by $\rH_1(U)$, and the relative \'etale homology by $\rH_1(U, Y)$.

\begin{remark} \label{Ridentify}
In previous sections, the homology has coefficients in ${\mathbb Z}$;
the \'etale homology has coefficients in a finite or $\ell$-adic ring.
After choosing an embedding $K \subset \mathbb{C}$ and applying Riemann's Existence Theorem, 
we may identify the profinite completion of $\rH_1(U(\mathbb{C}))$ with the \'etale homology $\rH_1(U)$.
Similarly, we may identify the profinite completion of $\pi_1(U(\mathbb{C}))$ with the
\'etale fundamental group $\pi_1(U)$.

We therefore can consider the elements $a_i, z_i, c_j, T, E_{i,j}$ to be in $\pi_1(U)$
and $\bar{a}_i, \bar{z}_i, \bar{c}_j, [E_{i,j}]$ to be in $\rH_1(U)$.
Similarly, we may consider $\beta, e_{i,j}$ to be in the \'etale fundamental groupoid and $[e_{i,j}]$ to be in 
$\rH_1(U, Y)$.
Likewise, we can consider $\Delta$ to be an element of $\rH_1(U) \wedge \rH_1(U)$ 
and its image $\rho$ to be an element of $\rH_1(X) \wedge \rH_1(X)$.
The results in Sections \ref{S2}-\ref{Smain} about these elements remain true in this context as well.
In particular, Theorem~\ref{Tintro} is true in the context of the \'etale homology.
\end{remark}

Beginning in Section~\ref{Sexplicit}, we use the coefficients $\Z/n\Z$ for the \'etale homology, where 
$n$ is the degree of the Fermat curve $X$.
As in Remark~\ref{Ridentify}, we may identify $\rH_1(U(\mathbb{C}); \Z/n\Z)$ with $\rH_1(U; \Z/n\Z)$.

\subsection{An arithmetic property of the action}

\begin{proposition} \label{Pabstractalg}
If $\sigma \in G_\QQ$, then $\sigma$ acts on $\rho$ via the cyclotomic character:
if $\sigma(\zeta)= \zeta^i$, then $\sigma(\rho) = \zeta^i \rho$.
In particular, if $\sigma \in G_K$, then $\sigma$ acts trivially on $\rho$.
\end{proposition}

\begin{proof}
Recall that $\rho$ is a generator for the image of $\rH_2(X) \to \rH_1(X) \wedge \rH_1(X)$.
The map $\rH_2(X) \to \rH_1(X) \wedge \rH_1(X)$ is $G_\QQ$-equivariant. 
By Poincar\'e duality, $\sigma \in G_\QQ$ acts on $\rH_2(X)$ via the cyclotomic character.
The mod $n$ cyclotomic character is trivial when restricted to $G_K$.
\end{proof}

\subsection{Partial information about the $G_{\QQ}$-action} \label{Sexplicit}

Let $n=p$ be a prime satisfying Vandiver's conjecture.
In this section, we collect some information about the action of $G_\QQ$ on $\rH_1(U, Y; \ZZ/p \ZZ)$.

By \cite[Section 10.5]{Anderson}, the action of $\sigma \in G_K$ on 
the generator $\beta$ for $\rH_1(U,Y; \ZZ/p)$ factors through $Q={\rm Gal}(L/K)$.
For $q \in Q$, in \cite[Theorem 1.1]{Baction}, the authors provide a completely explicit formula
for the element $B_q \in \Lambda_1 \otimes \ZZ/p \ZZ$ such that $q \circ \beta = B_q \beta$.

Here is some partial information about how ${\rm Gal}(K/\QQ)$ acts on $\beta$.
Let $a$ be a primitive root modulo $p$.
Let $\xi_a \in {\rm Gal}(K/\QQ)$ be the automorphism such that $\xi_a(\zeta_p)=\zeta_p^a$.
It generates ${\rm Gal}(K/\QQ) \cong (\ZZ/p\ZZ)^*$.
By \cite[Lemma 2.2]{Baction}, ${\rm Gal}(L/\QQ)$ is a semi-direct product of the form $Q \rtimes (\ZZ/p\ZZ)^*$.
We fix a lifting $(1,\xi_a)$ of $\xi_a$ in ${\rm Gal}(L/\QQ)$ and denote it also by $\xi_a$.

Since $\rH_1(U,Y)$ is stabilized by $G_{\QQ}$, there exists $R_a \in \Lambda_1$ such that $\xi_a(\beta) = R_a \beta$.
Modifying the lifting of $\xi_a$ by $q \in Q$ changes $R_a$ by multiplication by the element $B_q \in \Lambda_1$
from \cite[Theorem 1.1]{Baction}.
By \cite[Theorem~7]{Anderson}, $R_a$ is symmetric, meaning invariant when $\epsilon_0$ and $\epsilon_1$
are switched.
By \cite[Section~9.6]{Anderson}, $R_a-1$ is in the augmentation ideal $\langle y_0 y_1 \rangle$.
This is because $\xi_a(\beta)$ and $\beta$ have the same endpoints
and so $R_a\beta - \beta$ is in $\rH_1(U) = \langle y_0 y_1 \rangle \beta$. 
Also $R_a R_b = R_{ab}$.

Proposition~\ref{Pabstractalg} implies that $\xi_a(\rho)=a\rho$.
To state one more property of $R_a$, we consider the 
permutation action on $\Lambda_1$ given by 
${\rm perm}_a(\epsilon_0^i \epsilon_1^j) = \epsilon_0^{ai} \epsilon_1^{aj}$.

\begin{lemma} \label{propertyR}
Let $p$ be an odd prime and let $a$ be a generator of $(\ZZ/p\ZZ)^*$.
Then $\prod_{i=0}^{(p-1)/2 - 1} {\rm perm}_a^i(R_a) = 1$.
\end{lemma}

\begin{proof}
The automorphism $\xi_a^{(p-1)/2}$ is the restriction of complex conjugation to $K$.
This fixes $\beta$, since $\beta$ is defined over ${\mathbb R}$.
By induction, we check that $\xi_a^j(\beta) = (\prod_{i=0}^{j-1} {\rm perm}_a^i(R_a)) \beta$.
Thus \[\beta = \xi_a^{(p-1)/2} (\beta) = \left(\prod_{i=0}^{(p-1)/2-1} {\rm perm}_a^i(R_a)\right) \beta.\]
\end{proof}

If $p=5$, the properties above determine the action of ${\rm Gal}(K/\QQ)$ on $\rH_1(X; \ZZ/p \ZZ)$; 
see Section~\ref{Sverify}.

\section{Examples} \label{Sexample}
\label{examples}

In Section~\ref{Sverify}, if $n=5$, we verify the formula in Theorem~\ref{Tmainresulthere}
through an independent method using invariance properties.
This method provides additional information that allows us to determine the action of $G_\QQ$ on $\rH_1(X)$ if $n=5$; see Section \ref{wholeaction}.
In Section~\ref{Sapplication}, as a final application of the formula if $n=5$, we compute the 
dimension of the $G_K$-invariant subspace of $[\pi]_2/[\pi]_3 \otimes \ZZ/5 \ZZ$ 
and use it to show a coboundary map is trivial. 
For the calculations in this section, we use Magma \cite{Magma}; the code for our calculations is available here 
\cite{Mourcode}.

\subsection{An independent verification of the formula for $\rho$ if $n=5$}  \label{Sverify}

Recall that $\rho$ is a generator of the image of $\rH_2(X) \to \rH_1(X) \wedge \rH_1(X)$.
We study the subspace $\mathcal{A}$ of $\rH_1(X) \wedge \rH_1(X)$ 
of elements that are invariant under ${\rm Aut}(X)$ and $G_K$.
By Propositions~\ref{Pabstractinv} and \ref{Pabstractalg}, $\rho$ is 
contained in $\mathcal{A}$.
Using the material in Section~\ref{Sexplicit}, we determine which elements of $\mathcal{A}$
may be compatible with the action of ${\rm Gal}(K/\QQ)$. 
In Proposition~\ref{Pfinalex},
if $n=5$, we verify that there is a unique $1$-dimensional subspace of $\rH_1(X) \wedge \rH_1(X)$ 
determined by the requirements from the actions of ${\rm Aut}(X)$, $G_K$, and ${\rm Gal}(K/\QQ)$, and 
we verify that this subspace is the same as 
the one given by the formula in Theorem \ref{Tintro}.

\begin{definition} \label{DdefA}
Let $\mathcal{A}$ be the subset of $\alpha \in \rH_1(X) \wedge \rH_1(X)$ that satisfy these properties:
\begin{enumerate}
\item $\alpha$ is invariant under the automorphisms $\phi_0, \phi_1, \tau, \omega$ of $X$; and
\item $\alpha$ is invariant under the action of $\sigma \in G_K$.
\end{enumerate}
\end{definition}

\begin{lemma}
If $n = 5$, then
$\mathcal{A}$ is a 2-dimensional subspace of $\rH_1(X) \wedge \rH_1(X)$. 
\end{lemma}

\begin{proof}
To find $\mathcal{A}$,
we first compute the actions of $\epsilon_0, \epsilon_1, \tau, \sigma$ on $\rH_1(U)$.
Using the exterior wedge product, we then
compute their actions on $\rH_1(U) \wedge \rH_1(U)$. 
Lemma~\ref{Lrelationgamma} provides a basis for the kernel $S$ of $\rH_1(U) \to \rH_1(X)$.
By Lemma~\ref{Lwedge}, $S \wedge \rH_1(U)$ is
the kernel of $\rH_1(U) \wedge \rH_1(U) \to \rH_1(X) \wedge \rH_1(X)$.
We find the image in $\rH_1(X) \wedge \rH_1(X)$ of all 
$D \in \rH_1(U) \wedge \rH_1(U)$ that satisfy these properties:
$\epz D-D$, $\epo D-D$, and $\tau D - D$ are in $S \wedge \rH_1(U)$;
if $\sigma \in G_K$, then $(\sigma-1) D \in S \wedge \rH_1(U)$.  

For the $3$-cycle $\omega \in {\rm Aut}(X)$, it is more complicated to determine the action of $\omega$
on $\rH_1(X)$ since $\omega$ does not stabilize $\rH_1(U)$.
To check invariance under $\omega$, we use a basis for $\rH_1(X)$ found in \cite[Theorem~1.2]{E:integral}, 
together with information about how $\omega$ acts on $\rH_1(X)$ found in \cite[Section~4.3 and Proposition~5.1]{E:integral}.

If $n$ is a prime $p$ satisfying Vandiver's conjecture, the action of $\sigma \in G_K$
on $\rH_1(U)$ can be calculated.  
As explained in the introduction, the reason is that the action of $\sigma$ factors through $Q={\rm Gal}(L/K)$.
In \cite[Theorem~1.1 and Example 3.8]{Baction}, we gave an explicit formula for the action of each $q \in Q$ on 
$\rH_1(U)$.  
This yields an explicit formula for the action of $q \in Q$ on 
$\rH_1(U) \wedge \rH_1(U)$; for $n=5$, we implemented this formula in Magma \cite{Mourcode}.
See Example~\ref{EGQinv} for more details about this.

If $n = 5$, we explicitly find all of the actions above and compute in Magma that
$\mathcal{A}$ is a 2-dimensional subspace of $\rH_1(X) \wedge \rH_1(X)$. 
\end{proof}

By Proposition~\ref{Pabstractalg}, the action of ${\rm Gal}(K/\QQ)$ on $\rho$ is compatible with the cyclotomic character.
We consider which $\alpha \in \mathcal{A}$ have this compatibility property.

Let $a$ be a primitive root modulo $n=p$.
Let $\xi_a$ denote the automorphism $(1,\xi_a) \in {\rm Gal}(L/\QQ)$ from Section~\ref{Sexplicit}.
As seen in Section~\ref{Sexplicit}, the choice of lifting does not matter when working with $G_K$-invariant elements.
Recall that $\xi_a(\zeta) = \zeta^a$.
The element $\alpha \in \mathcal{A}$ is compatible with the cyclotomic character if it is the 
image of an element $D \in \rH_1(U) \wedge \rH_1(U)$ such that
\begin{equation} \label{ElastpropR}
\xi_a(D) - a D \in S \wedge \rH_1(U).
\end{equation}

As seen in Section~\ref{Sexplicit}, $\xi_a(\beta) = R_a \beta$ for some $R_a \in \Lambda_1$ such that:\\
(i) $R_a-1$ is in the augmentation ideal $\langle y_0 y_1 \rangle$;\\
(ii) $R_a$ is symmetric; and\\
(iii) $\prod_{i=0}^{(p-1)/2 - 1} {\rm perm}_a^i(R_a) = 1$ (Lemma~\ref{propertyR}).\\
For $p > 3$, properties (i)-(iii) do not determine $R_a$ but they do give partial information.

\begin{definition}
Let $\mathcal{R}$ be the set of $R_a \in \Lambda_1$ satisfying conditions (i)-(iii). 
\end{definition}

We compute the following in Magma.

\begin{lemma}
If $n=5$, then $\mathcal{R}$ is a set of size 125. 
\end{lemma}

If $n=5$ and $a=2$, the next result shows that
we can uniquely determine
$\langle \rho \rangle$ from these restrictions, despite the ambiguity for $R_a$.

\begin{proposition}\label{Pfinalex}
Let $n=5$ and $a=2$.  
There are exactly $5$ elements $\alpha \in \mathcal{A}$ lying under some
$D \in \rH_1(U) \wedge \rH_1(U)$ 
for which there is an $R_a \in \mathcal{R}$ such that the pair $(D, R_a)$ satisfies \eqref{ElastpropR}. 
These $\alpha$ are exactly the multiples of 
the image in $\rH_1(X) \wedge \rH_1(X)$
of the element $\Delta \in \rH_1(U) \wedge \rH_1(U)$ found in Theorem~\ref{Tintro}.
\end{proposition}

\begin{proof}
This follows from a Magma computation.  We consider all $D \in \rH_1(U) \wedge \rH_1(U)$ lying 
above $\mathcal{A}$.
To compute $\xi_a$ on $D$, we write $D$ as a sum of 
simple tensors $D = \sum_{t \in \mathcal{T}} D_t' \beta \wedge D_t'' \beta$,
where $\mathcal{T}$ is a finite index set and $D_t', D_t'' \in \Lambda_1$.
We compute $\xi_a(D)=\sum_{t \in \mathcal{T}} {\rm perm}_a(D_t') R_a \beta \wedge {\rm perm}_a(D_t'') R_a\beta$.
\end{proof}

We do not know if the analogue of Proposition~\ref{Pfinalex} is true for a prime $n>5$.

\subsection{The action of $G_\QQ$} \label{wholeaction}

Furthermore, if $n=5$ and $a=2$, we have enough information about $R_a \in \mathcal{R}$ to determine the action of 
${\rm Gal}(K/\QQ)$ on $\rH_1(X)$. 

\begin{proposition} \label{PfindRa}
Let $n=5$ and $a=2$.
There are 25 possibilities for $R_a \in \mathcal{R}$ from the calculation in Proposition~\ref{Pfinalex}. Each of the 25 elements $R_a-1$ has the same action on $\rH_1(X)$.  
\end{proposition}

\begin{proof}
Magma calculation.
\end{proof}

Here is one of the possibilities for $R_a$:
\begin{eqnarray*} \label{EfirstR}
R_{a,0} & = & 4\epz^4\epo^3  + 4\epz^4\epo^2 + 2\epz^4\epo  + 3\epz^3\epo^2 + 4\epz^3\epo + 4\epz^2\epo + 4\epz^3 + 4\epz^2   \\
 & +  &  4\epz^3\epo^4 + 4\epz^2\epo^4 + 2\epz\epo^4 + 3\epz^2\epo^3 +  4\epz\epo^3   + 
    4\epz\epo^2 + 4\epo^3 + 4\epo^2 + 3.
    \end{eqnarray*}

Write $y_0=\epz-1$ and $y_1=\epo-1$.  Then
\begin{eqnarray*} \label{EfirstRy}
R_{a,0} & = & 4y_0^4y_1^3 + y_0^4y_1^2 + 2y_0^4y_1 + y_0^3y_1^2 + 4y_0^3y_1  + 4y_0^2y_1 \\ 
& + & 4y_0^3y_1^4 + y_0^2y_1^4  + 2y_0y_1^4 + y_0^2y_1^3  + 4y_0y_1^3 + 4y_0y_1^2 + 2y_0^3y_1^3 + 2y_0y_1 + 1.
    \end{eqnarray*}
The set of $25$ possibilities for $R_a$ in Proposition \ref{PfindRa} is
$\left\{R_{a,0} + i v_1 + j v_2 \mid i,j \in \left\{0, \ldots, 4 \right\} \right\}$,
where: 
\begin{eqnarray*} 
v_1 & = & 2y_0^4y_1^4 + 3y_0^4y_1^2 + 3y_0^3y_1^3 + 3y_0^2y_1^4, \\
v_2 & = & 2y_0^4y_1^4 + 3y_0^4y_1^3 + 2y_0^4y_1^2 + 3y_0^3y_1^4 + 2y_0^2y_1^4.
\end{eqnarray*}
    
Recall from Section \ref{Sproperties} that $R_a$ is well-defined after 
making a choice of automorphism $\xi_a$ in ${\rm Gal}(L/\QQ)$ lifting the automorphism 
$\xi_a \in {\rm Gal}(K/\QQ)$.
Changing $\xi_a$ by $q \in Q$ changes $R_a$ by multiplication by the element $B_q \in \Lambda_1$
found in \cite[Theorem 1.1]{Baction}.

Suppose $\delta \in \rH_1(X)^{G_K}$.
Then $\delta$ is fixed by any automorphism $q \in {\rm Gal}(L/K)$.
By definition, the action of $q$ on $\delta$ is given by multiplication by $B_q$.
Then $B_q R_a \delta = R_a B_q \delta = R_a \delta$ for any $q \in  {\rm Gal}(L/K)$.
This means that the action of ${\rm Gal}(L/\QQ)$ on $\rH_1(X)^{G_K}$
does not depend on the choice involved in the definition for $R_a$.  

Let $J_5$ be the Jacobian of the Fermat curve of degree $5$.
Since $\rH_1(X, \ZZ/5 \ZZ)^{G_L} \cong J_5(L)[5]$ for a number field $L$, 
the next example can be deduced from earlier work of Rorhlich and Tzermias.
Let $J_5^\infty$ be the subgroup of $J_5$ of divisors of degree $0$ supported at the points where
$xyz=0$.
By \cite[Theorem 1]{rohrlich77}, ${\rm dim}_{\ZZ/5 \ZZ}(J_5^\infty)=8$.
By \cite[Proposition, Corollary 2, page 663]{tzermias5}, 
$J_5(\QQ(\zeta_5)) = J_5^\infty$ and
${\rm dim}_{\ZZ/5 \ZZ}(J_5(\QQ))=2$.


\begin{example} \label{EGQinv}
If $n=5$, the $G_K$-invariant subspace of $\rH_1(X; \ZZ/5 \ZZ)$ has dimension $8$, and
the $G_{\Q}$-invariant subspace of $\rH_1(X; \ZZ/5 \ZZ)$ has dimension $2$.
\end{example}

\begin{proof}
Write $n=p$.
The action of $G_K$ on $\rH_1(X; \ZZ/p \ZZ)$ factors through the field extension $L/K$ where 
$L$ is the splitting field of $1-(1-x^p)^p$.
If $p = 5$, there are 3 generators $\tau_0, \tau_1, \tau_2$ for $Q={\rm Gal}(L/K)$.
The formula for the action of each of these on $\rH_1(U; \ZZ/5 \ZZ)$ can be found in \cite[Example 3.8]{Baction}. 

Let ${\rm Fix}(\epz \epo) = \{\alpha \in \rH_1(U; \Z/5 \Z) \mid \epz \epo \alpha = \alpha\}$.

By \cite[Proposition~6.3]{WINF:birs}, letting $S={\rm Fix}(\epz \epo)$,
\footnote{In \cite[Proposition 6.3]{WINF:birs}, we used the notation ${\rm Stab}(\epz \epo)$ instead.}
\begin{equation} \label{Eh1X}
\rH_1(X; \ZZ/5 \ZZ)=\rH_1(U; \ZZ/5 \ZZ)/S.
\end{equation}

In Magma, we computed 
the action of $\tau_0, \tau_1, \tau_2$ on $\rH_1(X; \ZZ/5 \ZZ)$.
To determine the $G_K$-invariant subspace $I$ of $\rH_1(X; \ZZ/5 \ZZ)$, we computed the 
intersection of the kernels of the 3 operators $\tau_i -1$ for $i=0,1,2$.
For the $G_{\Q}$-invariant subspace, we computed the subspace of $I$ 
which is fixed by multiplication by ${\rm perm}_a(R_a)$.
\end{proof}

\subsection{An application about coboundaries} \label{Sapplication} 

Let $p$ be a prime satisfying Vandiver's Conjecture and let $K=\QQ(\zeta_p)$.
In \cite[Theorem 1.1]{Baction}, we gave an explicit formula for the action of $G_K$ on 
$\rH_1(X; \ZZ/p \Z) = \pi/[\pi]_2 \otimes \ZZ/p \Z$.
From the results in this paper, we obtain an explicit action of $G_K$ on the higher quotients 
$[\pi]_m/[\pi]_{m+1} \otimes \ZZ/p \Z$ as well.

We would like to thank the referee for bringing this idea to our attention.
Consider the short exact sequence
\[0 \to (\ZZ/p\ZZ)\rho \to \rH_1(X; \ZZ/p\ZZ) \wedge \rH_1(X; \ZZ/p\ZZ) \to [\pi]_2/[\pi]_3 \otimes \ZZ/p \ZZ \to 0.\]
Since $Q$ fixes $\rho$, this yields a long exact sequence
\begin{equation} \label{Elongexactsec5}
0 \to (\ZZ/p\ZZ) \rho \to \rH^0(Q; \rH_1(X; \ZZ/p\ZZ) \wedge \rH_1(X; \ZZ/p\ZZ)) 
\to \rH^0(Q; [\pi]_2/[\pi]_3 \otimes \ZZ/p\ZZ) \stackrel{\delta}{\to} \rH^1(Q; (\ZZ/p\ZZ) \rho).
\end{equation}

The fact that $Q$ fixes $\rho$ also implies that 
$\rH^1(Q; (\ZZ/p\ZZ) \rho) \cong {\rm Hom}(Q, \ZZ/p\ZZ) \cong (\ZZ/p\ZZ)^{(p+1)/2}$.
Given a $Q$-invariant element $\alpha$ of $[\pi]_2/[\pi]_3 \otimes \ZZ/p\ZZ$, 
consider a lift of $\alpha$ to $\tilde{\alpha} \in \rH_1(X; \ZZ/p\ZZ) \wedge \rH_1(X; \ZZ/p\ZZ)$.
If $q \in Q$, then $q(\tilde{\alpha}) = \tilde{\alpha} + s_q \rho$ for some $s_q \in \ZZ/p\ZZ$.
Then $\delta(\alpha)$ can be identified with the homomorphism $Q \to \ZZ/p\ZZ$ given by $q \mapsto s_q$.

Recall that $X$ has genus $g=(p-1)(p-2)/2$ and 
so $\rH_1(X; \ZZ/p \ZZ) \wedge \rH_1(X; \ZZ/p \ZZ)$ has dimension $\binom{2g}{2}$.
Thus $[\pi]_2/[\pi]_3 \otimes \ZZ/p \ZZ$ has dimension $\binom{2g}{2} -1$. 
If $p=5$, then $g=6$ and $[\pi]_2/[\pi]_3 \otimes \ZZ/5 \ZZ$ has dimension $65$.

\begin{example} \label{Aqinv}
If $p=5$, then
the $G_K$-invariant subspace of $\rH_1(X; \ZZ/5\ZZ) \wedge \rH_1(X; \ZZ/5\ZZ)$ has dimension $35$;
the $G_K$-invariant subspace of $[\pi]_2/[\pi]_3 \otimes \ZZ/5 \ZZ$ has dimension $34$;
and thus the coboundary map $\delta$ in \eqref{Elongexactsec5} is trivial.
\end{example}

\begin{proof}
From the computation in Example \ref{EGQinv}, we know the action of $\tau_i$ on $\rH_1(X; \ZZ/5\ZZ)$ for 
$i=0,1,2$.
From this, we computed the action of $\tau_i$ on $\rH_1(X; \ZZ/5\ZZ) \wedge \rH_1(X; \ZZ/5\ZZ)$
(resp.\ on the quotient of this by $\rho$).
We then computed the dimension of the intersection of the kernels of the 3 operators $\tau_i -1$ for $i=0,1,2$.
This dimension, which is $35$ (resp.\ $34$), is the dimension of the $G_K$-invariant subspace.

The fact that the coboundary map is trivial follows from the exact sequence in \eqref{Elongexactsec5}.
As an additional check (not included here), we computed the cocycle computationally and verified that it is trivial.
\end{proof}

\bibliographystyle{amsalpha}
\bibliography{biblio}

\end{document}